\theoremstyle{plain}
\newtheorem{Theoremx}{Theorem}
\newtheorem{theorem}{Theorem}[section]
\theoremstyle{definition}
\theoremstyle{definition}
\newtheorem{lemma}[theorem]{Lemma}
\newtheorem{corollary}[theorem]{Corollary}
\newtheorem{claim}[theorem]{Claim}
\newtheorem{question}[theorem]{Question}
\theoremstyle{definition}
\newtheorem{remark}[theorem]{Remark}
\theoremstyle{remark}
\newtheorem{example}[theorem]{Example}
\numberwithin{equation}{theorem}
\crefname{Theoremx}{Theorem}{Theorems}
\crefname{claim}{Claim}{Claims}
\crefname{question}{Question}{Questions}
\DeclareMathOperator{\Div}{{div}}
\newcommand{\fm}{\mathfrak{m}}
\newcommand{\Q}{\mathbb{Q}}
\DeclareMathOperator{\Diff}{Diff}
\newcommand{\NN}{\mathbb{N}}
\newcommand{\FF}{\mathbb{F}}
\DeclareMathOperator{\depth}{depth}
\DeclareMathOperator{\Spec}{{Spec}}
\DeclareMathOperator{\Tot}{{Tot}}
\DeclareMathOperator{\Hom}{Hom}
\DeclareMathOperator{\Supp}{{Supp}}
\DeclareMathOperator{\height}{{ht}}
\newcommand{\fp}{\mathfrak{p}}
\begin{document}

\makeatletter
\title[On \texorpdfstring{$F$}{F}-pure inversion of adjunction]{On \texorpdfstring{$F$}{F}-pure inversion of adjunction}
\author{Thomas Polstra}
\address{Department of Mathematics, University of Alabama, Tuscaloosa, AL 35487 USA}
\email{tmpolstra@ua.edu}
\thanks{Polstra was supported by NSF Grant DMS \#2101890 and by a grant from the Simons Foundation, Grant Number 814268, MSRI}
\author{Austyn Simpson}
\address{Department of Mathematics, University of Michigan, Ann Arbor, MI 48109, USA}
\email{austyn@umich.edu}
\thanks{Simpson was supported by NSF postdoctoral fellowship DMS \#2202890.}
\author{Kevin Tucker}
\address{Department of Mathematics, University of Illinois at Chicago, Chicago, IL 60607, USA}
\email{kftucker@uic.edu}
\thanks{Tucker was supported in part by NSF Grant DMS \#2200716.}
\maketitle

\begin{abstract}
We analyze adjunction and inversion of adjunction for the $F$-purity of divisor pairs in characteristic $p > 0$.  In this vein, we give a complete answer for principal divisors under $\mathbb{Q}$-Gorenstein assumptions but without divisibility restrictions on the index. We also give a detailed analysis relating the $F$-purity of the pairs $(R,\Delta + D)$ and that of $(R_D, \Diff_D(\Delta))$ motivated by Kawakita's log canonical inversion of adjunction via reduction to prime characteristic.
\end{abstract}

\section{Introduction}\label{sec:intro} Let $(R,\fm,k)$ be a local ring of prime characteristic $p>0$ and $R\to F_*R$ be the Frobenius map, where $F_*R$ denotes the Frobenius pushforward of $R$. For simplicity assume that $R$ is complete or $F$-finite (\emph{i.e.} $F_*R$ is a finite $R$-module). The ring $R$ is said to be \emph{$F$-pure} if $R\to F_*R$ splits. Let $x\in \fm$ be a nonzerodivisor of $R$. Under suitable hypotheses, Kawakita's breakthrough result on the inversion of adjunction of log canonical singularities \cite{Kaw07} (when viewed through the lens of reduction to prime characteristic via \cite{HW02}) predicts that the following are equivalent:
\begin{enumerate}
    \item The map $R\xrightarrow{\cdot F_*x^{p-1}} F_*R$ is split, \emph{i.e.} \emph{the pair $(R,\Div_R(x))$ is $F$-pure};\label{intro:adjunction}
    \item $R/xR$ is $F$-pure.\label{intro:inversion}
\end{enumerate}
The implications $(\ref{intro:adjunction})\Rightarrow(\ref{intro:inversion})$ and $(\ref{intro:inversion})\Rightarrow(\ref{intro:adjunction})$ 
are special cases of adjunction and inversion of adjunction of $F$-purity, respectively. While the forward direction requires no additional hypotheses, note that the converse $(\ref{intro:inversion})\Rightarrow(\ref{intro:adjunction})$ is known to fail if $R$ is not $\Q$-Gorenstein in light of counterexamples by Fedder \cite{Fed83} and Singh \cite{Sin99}. More generally, we may ask for the above equivalence after incorporating an effective $\Q$-divisor $\Delta$. Even after imposing $\Q$-Cartier assumptions on $K_R + \Delta$, however, the presence of $p$-torsion introduces additional subtleties that often require new methods to overcome.

The first contribution of the present article is the following positive solution to $F$-pure inversion of adjunction along principal ideals for log $\Q$-Gorenstein pairs $(R,\Delta)$ provided the denominators of the coefficients of $\Delta$ are prime to $p$ (but without any divisibility restrictions on the index of $K_R + \Delta$).

\begin{Theoremx}[\Cref{mainthm:principal with boundary}]
\label{mainthm:principal}
Let $(R,\fm,k)$ be an excellent local ring of prime characteristic $p>0$ and $x\in\fm$ a nonzerodivisor such that $R/xR$ is $(G_1)$ and satisfies Serre's condition $(S_2)$. Let $\Delta\geq0$ be an effective $\Q$-divisor of $R$ with components disjoint from $\Div_R(x)$ such that $K_R+\Delta$ is $\Q$-Cartier and $(p^e-1)\Delta$ is integral for all $e\gg 0$ and divisible. Then the pair $(R,\Delta+\Div_R(x))$ is $F$-pure if and only if $(R/xR,\Delta|_{\Div(x)})$ is $F$-pure.
\end{Theoremx}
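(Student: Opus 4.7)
The implication $(\Rightarrow)$ is the adjunction direction, and I would handle it by restricting a compatible splitting of the relevant twisted Frobenius on $R$ along $\Div_R(x)$. Since $R/xR$ is $(G_1)$ and $(S_2)$, the adjunction formula $K_{R/xR}=(K_R+\Div_R(x))|_{\Div_R(x)}$ holds and identifies the restricted splitting as a witness of $F$-purity for $(R/xR,\Delta|_{\Div(x)})$; the components-disjoint hypothesis on $\Delta$ guarantees $\Diff_{\Div(x)}(\Delta)=\Delta|_{\Div(x)}$, so no different correction appears. The substance of the theorem therefore lies in the converse.

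For $(\Leftarrow)$, the strategy is to translate $F$-purity of each pair into the existence of a suitable $R$-linear map via a Fedder-type criterion, and then lift a map on $R/xR$ to a map on $R$. Fix a sufficiently divisible $e$ so that $(p^e-1)\Delta$ is a Weil divisor. Then $F$-purity of $(R,\Delta+\Div_R(x))$ is equivalent to the existence of an $R$-linear map
\[
\varphi : F^e_* R \longrightarrow R
\]
associated, via the $\Q$-Cartier hypothesis on $K_R+\Delta$, to a fixed local generator $u$ of the reflexive sheaf $R\!\left((1-p^e)(K_R+\Delta)\right)$ at the generic points of $\Div_R(x)$, and satisfying $\varphi(F^e_* x^{p^e-1})\in R^\times$. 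Analogously, $F$-purity of $(R/xR,\Delta|_{\Div(x)})$ corresponds to the existence of the reduced map $\bar\varphi:F^e_*(R/xR)\to R/xR$.

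The central step is to show that $\bar\varphi$ admits a lift $\varphi$ as above. Because the components of $\Delta$ are disjoint from $\Div_R(x)$, the generator $u$ can be chosen to be a unit along $\Div_R(x)$, so reduction modulo $x$ matches the twisted Frobenius map on $R$ with the one on $R/xR$. Any lift $\varphi$ of $\bar\varphi$ then automatically satisfies $\varphi(F^e_* x^{p^e-1})\equiv\bar\varphi(F^e_* 1)\pmod{x}$, which is a unit by hypothesis, and hence $\varphi(F^e_* x^{p^e-1})$ is a unit in $R$ since $x\in\fm$. Thus the crux is the \emph{existence} of a lift, which I would establish by comparing the $R$-module parameterizing such maps with its reduction modulo $x$ and invoking the $(S_2)$-hypothesis together with reflexivity of the relevant sheaves to see that the comparison map is surjective.

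The principal obstacle is that the index of $K_R+\Delta$ may be divisible by $p$. When this occurs, the reflexive sheaves $R\!\left(m(K_R+\Delta)\right)$ do not arise as genuine powers of an invertible module, and their Frobenius pullbacks exhibit $p$-torsion that can obstruct the lifting. The assumption that $(p^e-1)\Delta$ is integral for $e$ sufficiently divisible is precisely what enables a compatible choice of generators $u$ across all such $e$, isolating the $p$-torsion so that it is swept into the kernel of the reduction map $\Hom(F^e_* R, R)\to\Hom(F^e_*(R/xR), R/xR)$ on the appropriately twisted summand and does not obstruct surjectivity. Making this compatibility precise—effectively, choosing $e$ so that Frobenius synchronizes the reflexive hull operations on $(p^e-1)(K_R+\Delta)$—is the technical heart of the argument and is where the present theorem goes beyond the classical prime-to-$p$ index case.
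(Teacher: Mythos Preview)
Your outline correctly identifies the structure of the problem and locates the difficulty, but it does not actually resolve it. The entire content of the inversion direction lies in the sentence ``invoking the $(S_2)$-hypothesis together with reflexivity of the relevant sheaves to see that the comparison map is surjective,'' and your final paragraph essentially concedes that this step is not carried out. Saying that ``choosing $e$ so that Frobenius synchronizes the reflexive hull operations'' is the technical heart is accurate, but it is a description of the obstacle, not a proof that it can be overcome. In particular, you give no mechanism by which the $(S_2)$ hypothesis on $R/xR$ alone forces surjectivity of
\[
\Hom_R\bigl(F^e_*R((p^e-1)\Delta),\,R\bigr)\longrightarrow \Hom_{R/xR}\bigl(F^e_*(R/xR)((p^e-1)\Delta|_{\Div(x)}),\,R/xR\bigr),
\]
and in fact this surjectivity can fail if the relevant reflexive sheaf on $R$ fails to restrict well modulo $x$.

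The paper's argument is Matlis dual to yours: rather than lifting splittings, it works with injectivity of maps on top local cohomology $H^d_\fm(R(K_R))\to F^e_*H^d_\fm(R(\Delta_e+p^eK_R))$, using that $H^d_\fm(R(K_R))$ and $H^{d-1}_\fm(R_{V(x)}(K_{R/xR}))$ are essential extensions of $k$. The point where your surjectivity would be needed becomes, dually, the injectivity of the connecting map $H^{d-1}_\fm(R(p^eK_R+\Delta_e)_{V(x)})\to H^d_\fm(R(p^eK_R+\Delta_e))$, and this is exactly what requires work: it holds if and only if $R(E)_{V(x)}$ is $(S_2)$ over $R/xR$ for the relevant $p$-power torsion divisor $E=-(p^e-1)K_R-\Delta_e$. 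The paper proves this $(S_2)$ property by an induction on dimension together with a local cohomology argument: one shows $H^2_\fm(R(E))\xrightarrow{\cdot x}H^2_\fm(R(E))$ is surjective (using $F$-purity of $R/xR$ and the splitting $R_{V(x)}(E|_{V(x)})\hookrightarrow F^e_*R_{V(x)}$), and then concludes $H^2_\fm(R(E))=0$ by Nakayama since this module is finitely generated. This is the missing idea in your proposal; without it, the lift of $\bar\varphi$ to $\varphi$ need not exist.
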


Note that \Cref{mainthm:principal} applied to the case that $R$ is a regular ring and $\Delta=0$ is Fedder's Criterion for a hypersurface to be $F$-pure \cite[Theorem~1.12]{Fed83}. Moreover, \cref{mainthm:principal} is a significant improvement over previous work by the first two named authors \cite[Theorem A]{PS22} which affirmatively settled the weaker question of whether $F$-purity deforms in $\Q$-Gorenstein rings. When $\Delta = 0$, \cref{mainthm:principal} may be derived from \cite{PS22} using a trick commonly attributed to Manivel \cite{Man93} together with an understanding of the behavior of $F$-purity under separable finite covers; for completeness, we give a detailed proof of this in \Cref{sec:manivel}.
However, the cyclic cover techniques employed in \cite{PS22} do not seem to lend themselves to the incorporation of a boundary in a straightforward manner. Our proof of \cref{mainthm:principal} both circumvents these difficulties and recovers the main theorem of \cite{PS22} while showing a more general result.

More generally, in \cref{sec:adjunction,sec:inversion} we turn our attention to $F$-pure adjunction and inversion of adjunction
beyond the case of principal ideals, \textit{i.e.} along a divisor $D$ with components disjoint from those of the boundary divisor $\Delta$. Specifically, we aim to relate the $F$-purity of $(R,\Delta +D)$ with that of $(R_D,\Diff_D(\Delta))$ where $\Diff_D(\Delta)$ is Shokurov's \emph{different}. Our analysis yields the following results.

\begin{Theoremx}[Adjunction of $F$-purity -- \cref{thm:Adjunction of purity}]
    \label{mainthm:adjunction}
Let $(R,\fm,k)$ be an excellent $(S_2)$ and $(G_1)$ local ring of prime characteristic $p>0$ and let $K_R$ be a choice of canonical divisor of $\Spec(R)$. Suppose that $D\geq 0$ is an effective integral $(S_2)$ and $(G_1)$ divisor, and let $\Delta\geq 0$ be an effective $\Q$-divisor on $\Spec R$ whose components are disjoint from $D$ and such that $(p^e-1)\Delta$ is integral for all sufficiently divisible $e\gg 0$. Suppose that $\Delta+K_R+D$ is $\Q$-Cartier. If $(R,\Delta+D)$ is $F$-pure then $(R_D,\Diff_D(\Delta))$ is $F$-pure.
\end{Theoremx}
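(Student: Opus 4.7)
My plan is to translate $F$-purity of $(R,\Delta+D)$ into the existence of a suitable Frobenius trace $\phi:F^e_*R\to R$, descend $\phi$ along the closed embedding $D\hookrightarrow\Spec R$, and identify the divisor associated with the restricted trace as one dominating $\Diff_D(\Delta)$.

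First, I would appeal to the standard dictionary, available in the $(G_1),(S_2)$ generality by Grothendieck duality for Frobenius together with a reflexive Hom--Tensor formalism, between nonzero elements of $\Hom_R(F^e_*R,R)$ and effective $\Q$-divisors $\Delta_\phi$ on $\Spec R$ for which $(p^e-1)(K_R+\Delta_\phi)\sim 0$. In this language, $F$-purity of $(R,\Delta+D)$ provides, for some sufficiently divisible $e$ with $(p^e-1)\Delta$ integral and $(p^e-1)(K_R+\Delta+D)$ Cartier, a $\phi$ with $\phi(F^e_*1)=1$ and $\Delta_\phi\geq\Delta+D$.

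Since $\Delta_\phi\geq D$ with $D$ integral, Schwede's theory of $F$-compatible ideals shows that $\phi(F^e_*I_D)\subseteq I_D$, where $I_D$ is the ideal sheaf of $D$. Hence $\phi$ descends to $\bar\phi:F^e_*R_D\to R_D$ with $\bar\phi(F^e_*1)=1$. Applying the same dictionary to the $(G_1),(S_2)$ ring $R_D$ attaches to $\bar\phi$ an effective $\Q$-divisor $\Delta_{\bar\phi}$ on $D$ satisfying $(p^e-1)(K_D+\Delta_{\bar\phi})\sim 0$, and a codimension-one computation along the regular locus of $D$ identifies this divisor with $(\Delta_\phi-D)|_D$. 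Combining with the defining identity $(K_R+D+\Delta)|_D=K_D+\Diff_D(\Delta)$ of the different, and using that the components of $\Delta$ are disjoint from $D$, one obtains $\Delta_{\bar\phi}\geq\Diff_D(\Delta)$. Thus $\bar\phi$ witnesses $F$-purity of $(R_D,\Diff_D(\Delta))$.

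The main technical point is the codimension-one identification $\Delta_{\bar\phi}=(\Delta_\phi-D)|_D$ in the absence of normality of either $R$ or $D$. This requires a careful reflexive computation using that both $R$ and $R_D$ are regular in codimension one outside prescribed loci, and that the different (and hence the output of the argument) is insensitive to modification in codimension two or higher on $D$. The $(G_1),(S_2)$ hypotheses on $R$ and on $D$ are essential here, precisely so that $\Diff_D(\Delta)$ is a well-defined $\Q$-Weil divisor on $D$ and the adjunction isomorphism between canonical sheaves of $R$ and $R_D$ holds in its reflexive form; once that identification is in hand, the descent of splittings is formal.
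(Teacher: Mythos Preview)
Your approach is genuinely different from the paper's and follows Schwede's strategy from \cite{Sch09}: translate to $p^{-e}$-linear maps, descend along the $F$-compatible ideal $I_D$, and compare divisors. The paper instead works with local cohomology and Matlis duality. It first shows (\cref{lem:RD F-pure and torsion divisors of index p^e}, \cref{cor:RD F-pure and torsion divisors of index p^e}) that $F$-purity of $(R,D)$ forces $R(-\Delta_e)_D$ to be reflexive over $R_D$ for $\Delta_e=(p^e-1)(\Delta+D+K_R)$, then dualizes the short exact sequence $0\to R(-\Delta_e-D)\to R(-\Delta_e)\to R_D(-\Delta_e|_D)\to 0$ to obtain a commutative diagram of top local cohomology with injective rows, from which injectivity of the relevant Frobenius on $H^{d-1}_\fm(R_D(K_D))$ follows directly.

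There is a real gap in your outline. You choose $e$ so that $(p^e-1)(K_R+\Delta+D)$ is Cartier; this forces the index of $K_R+\Delta+D$ to be prime to $p$, which is exactly the hypothesis the theorem is meant to avoid (the paper explicitly flags this as the restriction in Schwede's prior work). Moreover, your identifications $\Delta_{\bar\phi}=(\Delta_\phi-D)|_D$ and $(K_R+D+\Delta)|_D=K_D+\Diff_D(\Delta)$ are literally correct only when $D$ is Cartier in codimension $2$; in general the left-hand sides must be replaced by the different, and the equality $\Delta_{\bar\phi}=\Diff_D(\Delta_\phi-D)$ (the agreement of the ``$F$-different'' with Shokurov's different) is precisely the nontrivial comparison that your ``codimension-one computation'' would have to supply. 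Making that comparison work without index or codimension-$2$ restrictions requires a reflexivity input of the same strength as the paper's \cref{cor:RD F-pure and torsion divisors of index p^e}, which you have not provided. As written, your argument reproduces Schwede's theorem rather than the stated one.
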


\begin{Theoremx}[Inversion of Adjunction of $F$-purity along a $\Q$-Cartier divisor -- \cref{cor:inversion along Q-Cartier divisor}]\label{mainthm:inversion}
With the notation and assumptions of \cref{mainthm:adjunction}, suppose further that
\begin{enumerate}[(I)]
    \item $D$ is $\Q$-Cartier;\label{intro:q-cartier-1}
    \item for each $\Q$-Cartier divisor $E$ and $\fp\in D\subseteq \Spec(R)$, we have the inequality 
    \[
    \depth(R(E)_\fp)\geq \min\{\height(\fp), 3\}.
    \]\label{intro:q-cartier-2}
\end{enumerate}
If the pair $(R_D,\Diff_D(\Delta))$ is $F$-pure then the pair $(R,\Delta+D)$ is $F$-pure.

\end{Theoremx}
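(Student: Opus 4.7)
The plan is to lift an $F$-splitting from $R_D$ to $R$ by establishing surjectivity of a natural restriction map between the ambient $p^{-e}$-linear Hom modules; the depth hypothesis~(II) supplies the vanishing of the relevant obstruction, and the $\Q$-Cartier hypothesis~(I) is what renders that depth input applicable.

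First, fix $e\gg 0$ sufficiently divisible so that $(p^e-1)\Delta$ is integral and $(p^e-1)(K_R+\Delta+D)$ is Cartier. Under the $(S_2)+(G_1)$ hypotheses on $R$, the standard reflexive identification reads
\[
\Hom_R\bigl(F^e_* R((p^e-1)(\Delta+D)),\, R\bigr)\;\cong\; F^e_* R\bigl((1-p^e)(K_R+\Delta+D)\bigr),
\]
with the analogous identification on $R_D$ involving $K_{R_D}+\Diff_D(\Delta)$. In each case, $F$-purity of the pair is equivalent to the existence of a global section of the right-hand sheaf which evaluates to a unit against $F^e_* 1$. By the construction of Shokurov's different---precisely the mechanism underpinning \cref{mainthm:adjunction}---there is a natural restriction homomorphism
\[
\rho\colon \Hom_R\bigl(F^e_* R((p^e-1)(\Delta+D)),\, R\bigr)\longrightarrow \Hom_{R_D}\bigl(F^e_* R_D((p^e-1)\Diff_D(\Delta)),\, R_D\bigr),
\]
that sends splittings to splittings. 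Given surjectivity of $\rho$, one lifts the hypothesised $F$-splitting $\phi_D$ of $(R_D,\Diff_D(\Delta))$ to some $\phi$ on $R$. Because $I_D\subseteq\fm$, the ring homomorphism $R\to R_D$ is local, so $\phi(F^e_*1)\in R$ is a unit whenever its image in $R_D$ is, and hence $\phi$ witnesses the $F$-purity of $(R,\Delta+D)$.

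The central task is thus to prove that $\rho$ is surjective. I would start from the short exact sequence $0\to R(-D)\to R\to R_D\to 0$ (using that $D$ is $(S_2)+(G_1)$ to identify the ideal of $D$ with the divisorial sheaf $R(-D)$) and apply $\Hom_R(F^e_* R((p^e-1)(\Delta+D)),-)$; the cokernel of $\rho$ is then controlled by $\mathrm{Ext}^1_R(F^e_* R((p^e-1)(\Delta+D)),\,R(-D))$. Using Grothendieck duality for the Frobenius, together with the $\Q$-Cartier hypothesis~(I) on $D$ and the given $\Q$-Cartier hypothesis on $K_R+\Delta+D$, this Ext term can be reexpressed in terms of local cohomology of a rank-one reflexive sheaf $R(E)$ for a $\Q$-Cartier divisor $E$ assembled from $K_R$, $\Delta$, and $D$; hypothesis~(II) then yields $\depth R(E)_\fp\ge \min(\height\fp,3)$ for every $\fp\in D$, which forces the relevant $H^1_\fm(R(E))$, and hence the Ext, to vanish.

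The main obstacle is the bookkeeping in this last step: one must verify that the composite twist by $(1-p^e)(K_R+\Delta+D)$ and $-D$ genuinely assembles into a $\Q$-Cartier divisor $E$, which is exactly where condition~(I) is essential---without $D$ being $\Q$-Cartier the relevant sheaf would not fall within the scope of hypothesis~(II), and the depth input required for the vanishing would be unavailable. A secondary subtlety is reconciling the restriction map $\rho$ with the definition of the different, but this is implicit in the framework already developed to prove \cref{mainthm:adjunction}.
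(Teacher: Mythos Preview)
Your approach is Matlis dual in spirit to the paper's: you work on the $\Hom$ side while the paper works in local cohomology. That said, there is a genuine gap, and it is precisely the step you dismiss as a ``secondary subtlety''.

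Applying $\Hom_R(F^e_*R((p^e-1)(\Delta+D)),-)$ to $0\to R(-D)\to R\to R_D\to 0$ gives a map landing in $\Hom_R(F^e_*R((p^e-1)(\Delta+D)),R_D)$, \emph{not} in $\Hom_{R_D}(F^e_*R_D((p^e-1)\Diff_D(\Delta)),R_D)$. Identifying these two modules amounts to showing that the reflexification $R(-\Delta_e)_D\to R_D(-\Delta_e|_D)$ is an isomorphism, where $\Delta_e=(p^e-1)(K_R+\Delta+D)$. When the index of $K_R+\Delta+D$ is prime to $p$ this is automatic ($\Delta_e$ is Cartier for divisible $e$), which is exactly why Schwede's earlier argument goes through; but when $p$ divides the index, $\Delta_e$ is only $p$-power torsion, and this reflexivity statement is the entire content of the theorem. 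You cannot borrow it from the adjunction direction: in the proof of \cref{mainthm:adjunction} that reflexivity is \emph{deduced} from the assumed $F$-purity of $(R,\Delta+D)$ via \cref{lem:RD F-pure and torsion divisors of index p^e}, which is unavailable here.

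Your proposed $\Ext^1$ computation is also not justified. The claim that Grothendieck duality repackages $\Ext^1_R(F^e_*R((p^e-1)(\Delta+D)),R(-D))$ as $H^1_\fm$ of a single rank-one reflexive $R(E)$ with $E$ $\Q$-Cartier does not survive scrutiny: duality for the finite morphism $F^e$ introduces a twist by $(1-p^e)K_R$ together with $-p^eD$, and there is no reason the resulting $\Ext$ is governed by depth of one divisorial sheaf in a way that hypothesis~(II) directly addresses.

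The paper instead isolates the obstruction cleanly in \cref{lem:How to prove inversion of adjunction}: assuming $(R_D,\Diff_D(\Delta))$ is $F$-pure, the pair $(R,\Delta+D)$ is $F$-pure if and only if $R(E)_D$ is $(S_2)$ as an $R_D$-module for every $p$-power torsion divisor $E$. The proof of \cref{cor:inversion along Q-Cartier divisor} then verifies this $(S_2)$ condition directly: after reducing by induction to $\dim R\geq 3$, the short exact sequence $0\to R(E-D)\to R(E)\to R(E)_D\to 0$ embeds $H^1_\fm(R(E)_D)$ into $H^2_\fm(R(E-D))$; since $E$ is torsion and $D$ is $\Q$-Cartier by~(I), the divisor $E-D$ is $\Q$-Cartier, so hypothesis~(II) gives $\depth R(E-D)\geq 3$ and hence $H^2_\fm(R(E-D))=0$. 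This is where~(I) and~(II) actually enter, and it is a more elementary route than an $\Ext$ computation.
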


\noindent
We remark that assumption (\ref{intro:q-cartier-2}) is necessary in \cref{mainthm:inversion}  whenever $D$ is $\Q$-Cartier by \cref{lem:R has to be S3 sort of}. More generally, we provide a characterization for when inversion of adjunction of an $F$-pure pair is satisfied in the absence of properties (\ref{intro:q-cartier-1}) and (\ref{intro:q-cartier-2}). By \cref{cor:RD F-pure and torsion divisors of index p^e} and \cref{lem:How to prove inversion of adjunction}, if $(R_D,\Diff_D(\Delta))$ is $F$-pure then $(R,\Delta+D)$ is $F$-pure if and only if for each $p$-power torsion divisor $E$ of $R$, the $R_D$-module $R(E)/R(E-D)$ is $(S_2)$.

A particularly notable feature of our analysis above is the lack of index restrictions, as prior work by Schwede has shown \cref{mainthm:adjunction,mainthm:inversion} under the assumptions that $\Delta+D+K_R$ is $\mathbb{Q}$-Cartier of index not divisible by $p$ and $D$ is Cartier in codimension $2$ \cite{Sch09}. In particular, \cref{mainthm:principal} was solved by Schwede under the hypothesis that $R$ is $\Q$-Gorenstein of index not divisible by $p$. Note also that the strongly $F$-regular version of inversion of adjunction was settled by Das in \cite{Das15}. We have also strived throughout to avoid unnecessary $F$-finite restrictions.

Those familiar with Kawakita's theorem on log canonical inversion of adjunction may be surprised by the assumption that $D$ is $(S_2)$ and $(G_1)$, which is slightly weaker than assuming $D$ is normal and which is not a necessity in characteristic $0$. Indeed, the content of \cite{Kaw07} is that in equal characteristic $0$ a pair $(R,\Delta + D)$ is log canonical if and only if $(R_{D^N}, \Diff_{D^N}(\Delta))$ is log canonical where $D^N$ denotes the normalization of $D$. However, as with assumption (\ref{intro:q-cartier-2}) in the case that $D$ is $\Q$-Cartier, our assumption is a necessity in prime characteristic.  There are simple counterexamples to \cref{mainthm:inversion} if we consider the $F$-singularities of $D^N$ instead. Indeed, let $R=\FF_2[x,y,z]$ and $D=\Div_R(x^2-y^2z)$. Then the normalization of $R_D=\FF_2[x,y,z]/(x^2-y^2z)$ is regular with $\Diff_{D^N}(0)$ a smooth divisor so that $(R_{D^N}, \Diff_{D^N}(0))$ is $F$-pure, but the pair $(\FF_2[x,y,z], \Div_R(x^2-y^2z))$ is not $F$-pure by Fedder's Criterion (see also \cref{mainthm:principal}). See \cite[Example 8.4]{Sch09} for additional details, as well as the work of Miller and Schwede \cite{MS12} for an analysis of the behavior of $F$-purity via the normalization map.

\section{The \texorpdfstring{$\mathbb{Q}$}{Q}-Gorenstein case}
\label{sec:principal}

Consider a local ring $(R,\fm,k)$. Let $E_R(k)$ be an injective hull of $k$, and suppose that $R\to M$ is a map of $R$-modules. According to \cite[Lemma 2.1(e)]{HH95}, $R\to M$ is pure if and only if the induced map $E_R(k)\to E_R(k)\otimes_R M$ is injective, a fact that we will use repeatedly. The main goal of this section is to give a proof of \cref{mainthm:principal}. However, we first present a simple proof of $F$-pure (principal) inversion of adjunction when $R$ is a Gorenstein ring since it will guide our subsequent investigations.

\begin{example}[Inversion of Adjunction of $F$-purity, the Gorenstein case]
\label{ex:Inversion of Adjunction F-purity Gorenstein rings}
Let $(R,\fm,k)$ be a local $d$-dimensional Gorenstein ring of prime characteristic $p>0$, and $x\in R$ a nonzerodivisor. Let $\Delta\geq 0$ be an effective $\Q$-Cartier $\Q$-divisor with components disjoint from $\Div_R(x)$ such that $(p^e-1)\Delta$ is integral for all sufficiently divisible $e\gg 0$. We will show the following are equivalent:
\begin{itemize}
    \item $(R,\Delta+\Div_R(x))$ is $F$-pure;
    \item $(R/xR,\Delta|_{\Div_R(x)})$ is $F$-pure.
\end{itemize}
Recall that the pair $(R,\Delta+\Div_R(x))$ is $F$-pure if and only if $R\to F^e_*R((p^e-1)(\Delta+\Div_R(x)))$ is pure. There is a commutative diagram
\[
\begin{xymatrix}
    {
    R \ar[r]\ar[d]^{=} & F^e_*R((p^e-1)(\Delta+\Div_R(x)))\ar[d]^{\cdot F^e_*x^{p^e-1}}_{\cong}\\
    R \ar[r]^{\hspace{-3em}\cdot F^e_*x^{p^e-1}} & F^e_*R((p^e-1)\Delta).
    }
\end{xymatrix}
\]
Similarly, $(R/xR,\Delta|_{\Div_R(x)})$ is $F$-pure if and only if $R/xR\to (F^e_*(R/xR))((p^e-1)\Delta|_{\Div_R(x)})$ is pure. To summarize, we aim to show the following are equivalent:
\begin{itemize}
    \item $R\xrightarrow{\cdot F^e_*x^{p^e-1}} F^e_*R((p^e-1)\Delta)$ is pure;
    \item $R/xR\to F^e_*R/xR((p^e-1)\Delta|_{\Div_R(x)})$ is pure.
\end{itemize}

For each $e>0$ let $\Delta_e=(p^e-1)\Delta$. There exists $e_0$ so that for sufficiently divisible $e\gg 0$, $\Delta_e$ is a $p^{e_0}$-torsion integral divisor. Tensoring the map $R\to F^{e_0}_*R$ with $R(\Delta_e)$ and reflexifying over $R$ gives the map
\[
R(\Delta_e)\to F^{e_0}_*R(p^{e_0}\Delta_e)\cong F^{e_0}_*R.
\]
By \cite{Fed83}, $R$ is necessarily $F$-pure, and the $F$-purity of $R$ then gives that the above map $R(\Delta_e)\to F^{e_0}_*R$ is pure. In particular, it follows that $R(\Delta_e)$ is a Cohen-Macaulay $R$-module. Even further, $R(\Delta_e)/xR(\Delta_e)$ is a Cohen-Macaulay $R/xR$-module; as $R/xR$ is $(S_2)$ and $(G_1)$, we may therefore conclude that the $R/xR$-reflexification map is an isomorphism:
 \[
 R(\Delta_e)/xR(\Delta_e)\xrightarrow{\cong }R/xR(\Delta_e|_{\Div_R(x)})=R/xR((p^e-1)\Delta|_{\Div_R(x)}).
 \]

Consider now the following commutative diagram:
\[
\begin{xymatrix}
{
0\ar[r] & F^e_*R(\Delta_e)\ar[r]^{\cdot F^e_*x} & F^e_*R(\Delta_e)\ar[r] & F^e_*R/xR(\Delta_e|_{\Div_R(x)})\ar[r] & 0 \\
0\ar[r] & R\ar[r]^{\cdot x}\ar[u]_{\cdot F^e_*x^{p^e-1}} & R \ar[r]\ar[u]_{F^e}& R/xR \ar[r]\ar[u]_{F^e}& 0.
}
\end{xymatrix}
\]
 Recall that we wish to show the leftmost vertical map is pure if and only if the rightmost vertical map is pure. The modules $R$ and $F^e_*R(\Delta_e)$ are Cohen-Macaulay. Therefore all of the lower local cohomology modules of $R$ and $F^e_*R(\Delta_e)$ vanish and we have the commutative diagram of local cohomology modules whose horizontal arrows are injective:
\begin{equation}
\begin{xymatrix}
{
F^e_*H^{d-1}_\fm(R/xR(\Delta_e|_{\Div_R(x)}))\ar[r]^{\hspace{2em}\subseteq} & F^e_*H^d_\fm(R(\Delta_e)) \\
H^{d-1}_\fm(R/xR)\ar[r]^{\subseteq}\ar[u]^{ F^e} & H^d_\fm(R)\ar[u]_{\cdot F^e_*x^{p^e-1}}.
}\label{eq:Gor-inversion-1}
\end{xymatrix}
\end{equation}
The local cohomology modules $H^{d-1}_\fm(R/xR)\cong(0:_{H^d_\fm(R)}x)\cong E_{R/xR}(k)$ and $H^d_\fm(R)\cong E_R(k)$ are essential extensions of $k\cong (0:_{H^d_\fm(R)}\fm)$. Therefore the left vertical map is injective if and only if the right vertical map is injective. We observe the following isomorphisms of the vertical maps in (\ref{eq:Gor-inversion-1}):
\begin{align*}
\Big(H^{d-1}_\fm(R/xR)\stackrel{F^e}{\rightarrow}& F^e_* H^{d-1}(R/xR(\Delta_e|_{\Div_R(x)}))\Big)\\
&\cong \left(R/xR\rightarrow F^e_*(R/xR(\Delta_e|_{\Div_R(x)}))\right)\otimes_{R/xR} H^{d-1}_\fm(R/xR)
\end{align*}
and
\begin{align*}
    \left(H^d_\fm(R)\stackrel{\cdot F^e_*x^{p^e-1}}{\rightarrow} F^e_* H^d_\fm(R(\Delta_e))\right)&\cong \left(R\xrightarrow{\cdot F^e_*x^{p^e-1}} F^e_*R(\Delta_e)\right)\otimes_R H^d_\fm(R).
\end{align*}
Therefore $R\xrightarrow{\cdot F^e_*x^{p^e-1}}F^e_*R(\Delta_e)$ is pure if and only if $R/xR\to F^e_*(R/xR(\Delta_e|_{\Div_R(x)}))$ is pure, as desired.
\end{example} 

We briefly review the definition that we will use in this article of a $\Q$-Gorenstein ring and of a generalized divisor, as outside of the normal setting these notions may be somewhat unfamiliar. Let $(R,\fm,k)$ be an excellent equidimensional local ring satisfying conditions $(S_2)$ and $(G_1)$. We utilize the language of (generalized) divisors as introduced by Hartshorne in \cite{Har94}. In particular, in this article when we speak of a divisor $\Delta$ on $R$, we shall mean an \emph{almost Cartier divisor} in the language of \emph{loc. cit.} Explicitly, this means that $\Delta$ is represented by a finitely generated $R$-submodule $I\subseteq \Tot(R)$ of the total quotient ring of $R$ (that is, a fractional ideal) such that:
\begin{enumerate}
    \item $I_\fp=\Tot(R)_\fp$ for all minimal primes $\fp\subseteq R$;
    \item $I$ is reflexive, \emph{i.e.} $I\rightarrow \Hom_R(\Hom_R(I,R),R)$ is an isomorphism.
\end{enumerate}

A generalized divisor corresponding to a fractional ideal $I$ is \emph{effective} if $I\subseteq R$. Note that if $R$ is normal, this notion of divisors corresponds to the usual one. We refer the reader to \cite[\S~2]{Har94} or \cite[Appendix A]{MP21} for more details.

We assume that $R$ has a canonical divisor $K_R$, \emph{i.e.} $K_R$ is the class of divisor so that the corresponding fractional ideal $R(K_R)$ is a canonical module of $R$. We say that $R$ is \emph{$\Q$-Gorenstein} if there exists an integer $n>0$ so that $nK_R\sim 0$. The smallest positive integer $n$ with this property is the \emph{index} of $R$. If $R$ is $\Q$-Gorenstein and $x\in R$ is a nonzerodivisor so that $R/xR$ is $(S_2)$ and $(G_1)$ then $R/xR$ is also $\Q$-Gorenstein by \cite[Proposition~2.6]{PS22}. If $x\in R$ is such an element and if $M$ is an $R$-module, then we write $M_{V(x)}$ to denote the $R$-module $M/xM$. If $E$ is a divisor of $R$ then $R(E)$ is the corresponding fractional ideal. If $E$ has components disjoint from $V(x)$ then $E|_{V(x)}$ denotes the class of the restricted divisor along $V(x)$. Let $K_R$ be a choice of canonical divisor of $\Spec(R)$. By prime avoidance we may assume that $K_R$ has components disjoint from $V(x)$. Then the restricted divisor $K_{R/xR}:=(K_R)|_{V(x)}$ is a choice of canonical divisor of $R/xR$. If $E$ is a divisor of $R$ with components disjoint from $V(x)$ then $\Hom_{R_{V(x)}}(\Hom_{R_{V(x)}}(R(E)_{V(x)}, R_{V(x)}),R_{V(x)})\cong R_{V(x)}(E|_{V(x)})$. Notice that if $M$ is an $R$-module and $x$ is a nonzerodivisor on $M$ then there is a short exact sequence
\[
0\to M\xrightarrow{\cdot x} M\to M_{V(x)}\to 0.
\]
In particular, there is a right exact sequence of local cohomology modules
\[
H^{d-1}_\fm(M_{V(x)})\to H^d_\fm(M)\xrightarrow{\cdot x}H^d_\fm(M)\to 0.
\]
\cref{lem:Injective map of local cohomology} is an instance where the above right exact sequence enjoys the property of being short exact. We first record an elementary observation.

\begin{lemma}
\label{lem:Direct summand p^e torsion}
Let $(R,\fm,k)$ be an excellent local $(S_2)$ and $(G_1)$ $F$-pure ring of prime characteristic $p>0$. Suppose that $E$ is a divisor on $R$. Then $R(E)\to F^e_*R(p^eE)$ is a pure map. Moreover, if $E$ is torsion of index $p^e$ and if $x_1,\ldots,x_t$ is a regular sequence on $R$ then $x_1,\ldots, x_t$ is a regular sequence on $R(E)$.
\end{lemma}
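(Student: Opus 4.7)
The plan is to establish the stronger assertion that $R(E) \to F^e_* R(p^e E)$ admits an $R$-linear splitting. Fix a splitting $\phi \colon F^e_* R \to R$ of Frobenius, which exists by the $F$-purity hypothesis under the paper's standing $F$-finite or completeness assumption. Tensoring $\phi$ over $R$ with the reflexive module $R(E)$ produces
\[
\phi_E \colon F^e_* R \otimes_R R(E) \longrightarrow R(E),
\]
which splits the canonical map $R(E) \to F^e_* R \otimes_R R(E)$, $x \mapsto 1 \otimes x$. By the projection formula, the source identifies with $F^e_*(F^{e*} R(E))$, and the map we care about factors as
\[
R(E) \longrightarrow F^e_*(F^{e*} R(E)) \longrightarrow F^e_* R(p^e E),
\]
where the second arrow is $F^e_*$ applied to the natural map $F^{e*} R(E) \to R(p^e E)$ identifying $R(p^e E)$ with the reflexive hull of $F^{e*} R(E)$.

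The crux is to descend $\phi_E$ along this second arrow to obtain a splitting $\psi \colon F^e_* R(p^e E) \to R(E)$. By $(G_1)$ and $(S_2)$, $R(E)$ is invertible on an open $U \subset \Spec R$ whose complement has codimension $\geq 2$, so the reflexification is an isomorphism on $U$. Since $R(E)$ is reflexive and hence $(S_2)$, and since $\sheafhom(-, R(E))$ is reflexive (Hom into a reflexive module is reflexive), both
\[
\Hom_R\bigl(F^e_*(F^{e*} R(E)), R(E)\bigr) \quad \text{and} \quad \Hom_R\bigl(F^e_* R(p^e E), R(E)\bigr)
\]
are determined by their restrictions to $U$, where they coincide. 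Thus $\phi_E$ corresponds to a unique $\psi$; verifying that $\psi$ is a splitting reduces to checking the composition $R(E) \to F^e_* R(p^e E) \to R(E)$ is the identity on $U$, which then propagates globally by uniqueness of $(S_2)$-extensions of maps between reflexive modules.

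For the moreover statement, the hypothesis $p^e E \sim 0$ yields $R(p^e E) \cong R$ as $R$-modules, so the split injection becomes $R(E) \hookrightarrow F^e_* R$, exhibiting $R(E)$ as a direct $R$-summand of $F^e_* R$. Any $R$-regular sequence $x_1, \ldots, x_t$ is $F^e_* R$-regular, since multiplication by $x_i$ on $F^e_* R$ coincides with multiplication by $x_i^{p^e}$ on $R$, and $p^e$-th powers of regular sequences remain regular. Regular sequences descend to direct summands, giving the conclusion.

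The main obstacle is the factorization of $\phi_E$ through the reflexive hull. Morally this is just Hartogs' extension for Hom into reflexive modules, but it must be executed with care to guarantee that the descended map $\psi$ is a genuine splitting rather than merely some lift.
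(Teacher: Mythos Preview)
Your approach is correct and essentially the same as the paper's: both tensor a Frobenius splitting with $R(E)$ and then pass to the reflexive hull to obtain a splitting of $R(E)\to F^e_*R(p^eE)$, and both deduce the regular-sequence statement from $R(E)$ sitting as a summand of $F^e_*R$ once $p^eE\sim 0$. The only differences are cosmetic: the paper handles the passage to the reflexive hull in a single phrase (``apply $-\otimes_R R(E)$ and reflexify''), relying implicitly on functoriality of $(-)^{**}$, whereas you spell it out via a Hartogs-type extension of $\phi_E$ over the codimension-$\geq 2$ locus; and the paper explicitly reduces to the complete case before invoking a splitting, while you fold this into a parenthetical about standing assumptions.
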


\begin{proof}
Consider the pure map $R\to F^e_*R$. We claim that the composition of maps $R(E)\to F^e_*R\otimes_R R(E)\to F^e_*R(p^eE)$ is a pure map. The assumptions that $R$ is $F$-pure, $(G_1)$ and $(S_2)$ are unchanged by completion (by \cite[Corollary 2.3]{MP21}, \cite[Theorem 18.3]{Mat89}, and \cite[\href{https://stacks.math.columbia.edu/tag/0339}{Tag 0339}]{stacks-project} respectively, the latter using the excellence assumption on $R$), as are the assumptions on the divisor $E$. Hence we may assume that $R$ is complete, so there is a splitting of $R\to F^e_*R$. Applying $-\otimes_R R(E)$ to this splitting and reflexifying we find that there is a splitting of $R(E)\to F^e_*R(p^eE)$. In particular, $R(E)\to F^e_*R(p^eE)$ is pure.

Now suppose that $E$ is torsion of index $p^e$ and $x_1,\ldots, x_t$ is a regular sequence on $R$. Then $R(E)\to F^e_*R(p^eE)\cong F^e_*R$. The sequence $x_1^{p^e},\ldots,x_t^{p^e}$ is a regular sequence on $R$ and therefore $x_1,\ldots,x_t$ is a regular sequence on $F^e_*R$. It follows that $x_1,\ldots, x_t$ is a regular sequence on $R(E)$ as well since we have an injection $R(E)\otimes_R R/(x_1,\ldots,x_i)\hookrightarrow F^e_*R\otimes_R R/(x_1,\ldots,x_i)$ for every $i$.
\end{proof}

\begin{lemma}
\label{lem:local cohomology and reflexification}
Let $(R,\fm,k)$ be a $d$-dimensional excellent local ring and $\varphi:M\to N$ a map of $R$-modules for which $\varphi$ is an isomorphism in codimension $1$. Then $H^d_\fm(M)\to H^d_\fm(N)$ is an isomorphism.
\end{lemma}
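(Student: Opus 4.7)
The plan is to decompose $\varphi$ through its image and exploit Grothendieck vanishing for modules supported in small codimension. Set $K := \ker(\varphi)$ and $C := \operatorname{coker}(\varphi)$. The hypothesis that $\varphi$ is an isomorphism in codimension $1$ gives $K_\fp = 0 = C_\fp$ for every prime $\fp$ of height at most $1$. Consequently, every $\fp \in \Supp(K) \cup \Supp(C)$ satisfies $\height(\fp) \geq 2$, and since $\dim(R/\fp) \leq d - \height(\fp)$, we conclude $\dim(K), \dim(C) \leq d-2$. Grothendieck vanishing then forces $H^i_\fm(K) = H^i_\fm(C) = 0$ for all $i \geq d-1$.

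Let $I := \operatorname{image}(\varphi)$ and split $\varphi$ into the two short exact sequences
\[
0 \to K \to M \to I \to 0 \qquad \text{and} \qquad 0 \to I \to N \to C \to 0.
\]
The associated long exact sequences in local cohomology contain the segments
\[
0 = H^d_\fm(K) \to H^d_\fm(M) \to H^d_\fm(I) \to H^{d+1}_\fm(K) = 0
\]
and
\[
0 = H^{d-1}_\fm(C) \to H^d_\fm(I) \to H^d_\fm(N) \to H^d_\fm(C) = 0,
\]
where the outer vanishings combine the previous paragraph with Grothendieck's theorem $H^i_\fm(-) = 0$ for $i > d$. This exhibits both $H^d_\fm(M) \to H^d_\fm(I)$ and $H^d_\fm(I) \to H^d_\fm(N)$ as isomorphisms, and their composite is precisely $H^d_\fm(\varphi)$.

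There is no substantive obstacle here; the argument is a short diagram chase powered by top local cohomology vanishing on modules whose support has codimension at least $2$. The only mild subtlety is confirming the dimension bound $\dim(K), \dim(C) \leq d-2$ without any equidimensionality hypothesis on $R$, which is handled uniformly by the inequality $\dim(R/\fp) \leq d - \height(\fp)$ valid for any Noetherian local ring.
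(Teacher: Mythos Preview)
Your proof is correct and follows essentially the same route as the paper's: both decompose $\varphi$ via its image (equivalently, $M/K$), observe that the kernel and cokernel have support of codimension at least $2$ and hence dimension at most $d-2$, and then read off the desired isomorphisms from the two long exact sequences in local cohomology using Grothendieck vanishing. Your write-up is slightly more explicit about the dimension inequality and the specific vanishing degrees, but the argument is the same.
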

\begin{proof}
There exists a four term exact sequence
\[
0\to K\to M\xrightarrow{\varphi} N \to C\to 0
\]
so that $K$ and $C$ are not supported in codimension $1$. Therefore $H^{i}_\fm(K)=H^i_\fm(C)=0$ for $i=d,d-1$. Split the above exact sequence into two short exact sequences
\[
0\to K\to M\to M/K\to 0
\]
and 
\[
0\to M/K\xrightarrow{\varphi} N\to C\to 0.
\]
Examine the long exact sequences of local cohomology modules to conclude that $H^d_\fm(M)\cong H^d_\fm(M/K)\cong H^d_\fm(N)$.
\end{proof}

We will frequently use finite generation of certain local cohomology modules when proving our main theorems. To that end, we have:
\begin{lemma}\label{lem:S2 has fg H2}
Let $(R,\fm,k)$ be a complete equidimensional local ring of dimension $d$, and let $M$ be a finitely generated $R$-module satisfying Serre's condition $(S_t)$ for some $t<d$. Then $\ell_R(H^t_\fm(M))<\infty$.
\end{lemma}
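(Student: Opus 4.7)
The plan is to use local duality to convert the finite-length claim into a support question for a coherent module, and then close it with Serre's condition $(S_t)$. Since $R$ is complete, it admits a normalized dualizing complex $\omega_R^\bullet$ (cohomology in degrees $[-d,0]$), and $H^t_\fm(M)$ is always Artinian for $M$ finitely generated. By Matlis duality, $H^t_\fm(M)$ has finite length if and only if its Matlis dual is annihilated by a power of $\fm$. Local duality identifies this Matlis dual with the finitely generated module $\operatorname{Ext}^{-t}_R(M,\omega_R^\bullet)$ (interpreted in the derived sense), reducing the problem to showing that this module vanishes after localizing at every $\fp\in\Spec(R)\setminus\{\fm\}$.

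Fix such a $\fp$, and set $c:=\dim R/\fp$ and $h:=\height\fp$. Because $R$ is equidimensional and catenary (the latter from completeness), $c+h=d$ and $c\ge 1$. The localized complex $(\omega_R^\bullet)_\fp$ is a dualizing complex for $R_\fp$ and agrees with $\omega_{R_\fp}^\bullet[c]$ after accounting for the codimension shift, so
\[\operatorname{Ext}^{-t}_R(M,\omega_R^\bullet)_\fp \;\cong\; \operatorname{Ext}^{c-t}_{R_\fp}\bigl(M_\fp,\,\omega_{R_\fp}^\bullet\bigr);\]
applying local duality on $R_\fp$, this group vanishes if and only if $H^{t-c}_{\fp R_\fp}(M_\fp)=H^{t+h-d}_{\fp R_\fp}(M_\fp)=0$. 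Writing $i:=t+h-d$, the bound $h<d$ forces $i<t$.

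Finally, the vanishing $H^i_{\fp R_\fp}(M_\fp)=0$ follows from Grothendieck vanishing once one checks $i<\depth M_\fp$. By hypothesis $\depth M_\fp\ge\min(t,h)$: if $h\ge t$, then $\depth M_\fp\ge t>i$; if $h<t$, then $\depth M_\fp\ge h$, while the assumption $t<d$ rearranges to $i<h$. Either way $i<\depth M_\fp$, so the local cohomology group vanishes and $\operatorname{Ext}^{-t}_R(M,\omega_R^\bullet)_\fp=0$, finishing the argument. The one delicate point is the shift bookkeeping for the dualizing complex under localization: the equidimensional hypothesis is precisely what produces $c+h=d$ and drops the exponent $t+h-d$ strictly below $t$, so that the $(S_t)$-bound combined with Grothendieck vanishing closes the loop.
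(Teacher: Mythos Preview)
Your argument is correct. The paper does not spell out a proof but defers to \cite[Exercise~29 and Lemma~4.5]{MP21}; your local-duality reduction---passing to the finitely generated module $\operatorname{Ext}^{-t}_R(M,\omega_R^\bullet)$, localizing with the codimension shift afforded by equidimensionality, and then invoking the $(S_t)$ bound together with Grothendieck vanishing---is the standard method and matches what the cited reference has in mind.
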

\begin{proof}
This is \cite[Exercise 29]{MP21} and follows from the proof of \cite[Lemma 4.5]{MP21}.
\end{proof}

We remark that the equidimensionality assumption in \Cref{lem:S2 has fg H2} is harmless, as this result will only be applied to local rings which are $(S_2)$ and are homomorphic images of Gorenstein local rings --- such rings are always equidimensional.

\begin{lemma}
\label{lem:Injective map of local cohomology}
Let $(R,\fm,k)$ be a $d$-dimensional excellent local $(S_2)$ and $(G_1)$ ring of prime characteristic $p>0$. Suppose that $x\in R$ is a nonzerodivisor such that $R/xR$ is $(S_2)$ and $(G_1)$. Let $E$ be a Weil divisor of $R$ with the property that $R(E)_{V(x)}$ is reflexive. Then the natural map of local cohomology modules
\[
H^{d-1}_\fm\left(R(-E+K_R)_{V(x)}\right)\to H^d_\fm(R(-E+K_R))
\]
is injective.
\end{lemma}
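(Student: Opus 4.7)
The plan is to identify the displayed map as the connecting homomorphism $\partial$ in the long exact sequence of $\fm$-local cohomology attached to
\[
0 \to M \xrightarrow{\cdot x} M \to M_{V(x)} \to 0,
\]
where $M := R(-E+K_R)$ (and $x$ is a nonzerodivisor on the torsion-free fractional ideal $M$). Since $\fm$-local cohomology and all the hypotheses --- the $(S_2)$ and $(G_1)$ conditions on $R$ and $R/xR$, the divisor-theoretic data, and the reflexivity of $R(E)_{V(x)}$ --- are preserved under completion, I may assume $R$ is complete, so that $R$ is a homomorphic image of a Gorenstein ring admitting a canonical module $\omega_R = R(K_R)$, and $R/xR$ has canonical module $\omega_{R/xR} \cong \omega_R/x\omega_R$.

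Matlis dualizing the relevant portion of the long exact sequence, local duality provides natural identifications
\[
H^d_\fm(M)^\vee \cong \Hom_R(M, \omega_R), \qquad H^{d-1}_\fm(M_{V(x)})^\vee \cong \Hom_{R/xR}(M_{V(x)}, \omega_{R/xR}),
\]
each of which follows from the right-exactness of top local cohomology and the defining property $H^{\dim}_\fm(-)^\vee = \omega_{(-)}$ applied over $R$ and $R/xR$, respectively. Under these identifications, $\partial$ is injective if and only if its Matlis dual $\partial^\vee$ is surjective. One verifies --- either by naturality of duality pairings, or by comparing with the connecting map of the Ext long exact sequence obtained by applying $\Hom_R(-,\omega_R)$ to the displayed short exact sequence together with the change-of-rings identification $\mathrm{Ext}^1_R(M_{V(x)}, \omega_R) \cong \Hom_{R/xR}(M_{V(x)}, \omega_{R/xR})$ --- that $\partial^\vee$ is the ``reduction modulo $x$'' assignment $\varphi \mapsto \bar\varphi$, and in particular factors through $R(E)/xR(E) = R(E)_{V(x)}$.

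Finally, the divisor-theoretic dictionary identifies $\Hom_R(M,\omega_R) \cong R(E)$, and --- using that $\omega_{R/xR}$ is a reflexive $R/xR$-module, so $\Hom_{R/xR}(-,\omega_{R/xR})$ depends only on reflexifications --- identifies $\Hom_{R/xR}(M_{V(x)}, \omega_{R/xR}) \cong R/xR(E|_{V(x)})$ via the reflexification $(M_{V(x)})^{**} \cong R/xR(-E|_{V(x)} + K_{R/xR})$. Under these, the induced map $R(E)_{V(x)} \to R/xR(E|_{V(x)})$ is the canonical map into the double dual, which is an isomorphism exactly when $R(E)_{V(x)}$ is reflexive --- precisely our hypothesis. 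Thus $\partial^\vee$ is surjective and $\partial$ is injective. The principal technical care is the bookkeeping of identifications: confirming that, under the change-of-rings and divisor dictionaries, the connecting map $\partial^\vee$ really lines up with the concrete reflexification map, after which the reflexivity hypothesis enters cleanly.
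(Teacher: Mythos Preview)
Your proof is correct and follows essentially the same approach as the paper: both arguments hinge on the Matlis duality between the short exact sequence $0\to R(E)\xrightarrow{\cdot x} R(E)\to R(E)_{V(x)}\to 0$ and the local cohomology sequence for $M=R(-E+K_R)$, with the reflexivity hypothesis entering exactly to make the relevant restriction map $R(E)_{V(x)}\to R_{V(x)}(E|_{V(x)})$ an isomorphism. The only difference is the direction in which the duality is run---the paper dualizes the $R(E)$ sequence and computes the resulting local cohomology modules explicitly (via tensor--hom adjunction and \cref{lem:local cohomology and reflexification}), whereas you start from the local cohomology connecting map and invoke local duality as a black box to land back at $R(E)$; this reversal is why you must additionally identify $\partial^\vee$ with the reduction-mod-$x$ map, a bookkeeping step the paper's direction avoids.
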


\begin{proof}
Without loss of generality we may assume that $E$ has components disjoint from $V(x)$. In particular, the reflexification 
$R(E)_{V(x)}\to R_{V(x)}(E|_{V(x)})$ is an isomorphism. There is a short exact sequence
\begin{align}
\label{eq:SES with high depth}
0\to R(E)\xrightarrow{\cdot x} R(E)\to R_{V(x)}(E|_{V(x)})\to 0.
\end{align}
The top local cohomology module $H^d_\fm(R(K_R))$ serves as the injective hull of the residue field. Therefore we consider a choice of the Matlis duality functor $\Hom_R(-,H^d_\fm(R(K_R)))$. 
By Tensor-Hom adjunction 
\begin{align*}
\Hom_R(R(E),H^d_\fm(R(K_R)))&\cong  R(-E)\otimes_R H^d_\fm(R(K_R)).
\end{align*}
The local cohomology module $H^d_\fm(R(K_R))$ is a cokernel of a \u{C}ech complex, and tensor products preserve cokernels. Therefore
\[
R(-E)\otimes_R H^d_\fm(R(K_R))\cong  H^d_\fm(R(-E)\otimes_R R(K_R)).
\]
The reflexification map $R(-E)\otimes_R R(K_R)\to R(-E+K_R)$ is an isomorphism in codimension $1$. Therefore by \cref{lem:local cohomology and reflexification}
\[
H^d_\fm(R(-E)\otimes_R R(K_R))\cong H^d_\fm(R(-E+K_R)).
\]
Similarly, the Matlis dual of $R_{V(x)}(E|_{V(x)})$ is
\begin{align*}
    \Hom_R(R_{V(x)}(E|_{V(x)}), H^d_\fm(R(K_R)))&=\Hom_R(R_{V(x)}(E|_{V(x)}), 0:_{H^d_\fm(R(K_R))}x)\\
    &\cong  \Hom_R(R_{V(x)}(E|_{V(x)}), H^{d-1}_\fm(R(K_{R/xR})))\\ &\cong H^{d-1}_\fm(R_{V(x)}(-E|_{V(x)}+K_{R/xR})).
\end{align*}
Therefore the Matlis dual of (\ref{eq:SES with high depth}) is the short exact sequence
{
\footnotesize
\begin{align}
\label{eq:SES of local cohomology modules}
0\to H^{d-1}_\fm(R_{V(x)}(-E|_{V(x)}+K_{R/xR}))\to H^d_\fm(R(-E+K_R))\xrightarrow{\cdot x}H^d_\fm(R(-E+K_R))\to 0.
\end{align}
}
The reflexification map $R(-E+K_R)_{V(x)}\to R_{V(x)}(-E|_{V(x)}+K_{R/xR})$ is an isomorphism at the codimension $1$ points of $R/xR$. Therefore
\begin{align}
\label{eq:isomorphism of local cohomology}
   H^{d-1}_\fm(R(-E+K_R)_{V(x)})\cong H^{d-1}_\fm(R_{V(x)}(-E|_{V(x)}+K_{R/xR})). 
\end{align}
We conclude the proof by plugging (\ref{eq:isomorphism of local cohomology}) into (\ref{eq:SES of local cohomology modules}).
\end{proof}

We are now prepared to prove \Cref{mainthm:principal}.

\begin{theorem}
\label{mainthm:principal with boundary}
Let $(R,\fm,k)$ be an excellent local ring of prime characteristic $p>0$ and $x\in\fm$ a nonzerodivisor such that $R/xR$ is $(G_1)$ and $(S_2)$. Let $\Delta\geq0$ be an effective $\Q$-divisor of $R$ with components disjoint from $\Div_R(x)$ such that $K_R+\Delta$ is $\Q$-Cartier and $(p^e-1)\Delta$ is integral for all $e\gg 0$ sufficiently large and divisible. Then the pair $(R,\Delta+\Div_R(x))$ is $F$-pure if and only if $(R/xR,\Delta|_{\Div(x)})$ is $F$-pure.
\end{theorem}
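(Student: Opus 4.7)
The plan is to reduce to the complete case, reformulate each purity condition as the injectivity of an $R$- (resp.\ $R/xR$-) linear map on top local cohomology, and then exploit the identification of $E_{R/xR}(k)$ as the $x$-torsion submodule of $E_R(k)$ together with the fact that $E_R(k)$ is essential over its socle in order to bridge the two.

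After reducing to the complete case, I would choose $e \gg 0$ sufficiently divisible so that $\Delta_e := (p^e-1)\Delta$ is integral and $(p^e-1)(K_R+\Delta) = \Div_R(g)$ is principal for some $g \in \Tot(R)^{\ast}$. Since $\Div_R(g)$ has components disjoint from $V(x)$, the image $\overline{g}$ of $g$ in $\Tot(R/xR)$ represents the principal divisor $(p^e-1)(K_{R/xR}+\Delta|_{V(x)})$. As in \cref{ex:Inversion of Adjunction F-purity Gorenstein rings}, $(R,\Delta+\Div_R(x))$ is $F$-pure iff the twisted map $R \xrightarrow{\cdot F^e_* x^{p^e-1}} F^e_* R(\Delta_e)$ is pure, while $(R/xR,\Delta|_{\Div(x)})$ is $F$-pure iff $R/xR \xrightarrow{F^e} F^e_*(R/xR)(\Delta_e|_{V(x)})$ is pure. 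Purity is equivalent to injectivity after tensoring with the injective hull of the residue field. Tensoring each map with the appropriate canonical module, applying the Frobenius adjunction isomorphism $F^e_* M \otimes_R R(K_R) \cong F^e_*(R(\Delta_e + p^eK_R))$ (after reflexification, invoking \cref{lem:local cohomology and reflexification}), and using the isomorphism $R(\Delta_e + p^eK_R) \xrightarrow{\cdot g} R(K_R)$, these purity conditions become the injectivity of $R$- (resp.\ $R/xR$-) linear maps
\[
\Phi_R: H^d_\fm(R(K_R)) \to H^d_\fm(F^e_* R(K_R)), \qquad \Phi_{R/xR}: H^{d-1}_\fm((R/xR)(K_{R/xR})) \to H^{d-1}_\fm(F^e_*(R/xR)(K_{R/xR})),
\]
with underlying module maps $\omega \mapsto F^e_*(g\,x^{p^e-1}\omega^{p^e})$ and $\overline{\omega} \mapsto F^e_*(\overline{g}\,\overline{\omega}^{p^e})$ respectively.

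By \cref{lem:Injective map of local cohomology} applied with $E=0$, the module $H^{d-1}_\fm((R/xR)(K_{R/xR}))$ is identified with the $x$-torsion submodule $(0:_{E_R(k)} x)$ of $H^d_\fm(R(K_R)) \cong E_R(k)$; the same Matlis-dual argument applied to $F^e_* R(K_R)$ identifies the target $H^{d-1}_\fm(F^e_*(R/xR)(K_{R/xR}))$ with the $x$-torsion of $H^d_\fm(F^e_* R(K_R))$. The naturality of the long exact sequence of local cohomology, together with the commutativity of the module-level diagrams relating the Frobenius on $R$ and $R/xR$ (after appropriate twists), then forces the restriction of $\Phi_R$ to the $x$-torsion to coincide with $\Phi_{R/xR}$. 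Since $E_R(k)$ is essential over its socle $k \subseteq (0:_{E_R(k)}x) \cong E_{R/xR}(k)$, any nonzero $\eta \in \ker \Phi_R$ admits a nonzero multiple $x^{n-1}\eta$ lying in the $x$-torsion, and by $R$-linearity this multiple lies in $\ker \Phi_{R/xR}$; the reverse implication is immediate by restriction. Thus $\Phi_R$ is injective iff $\Phi_{R/xR}$ is injective, yielding the desired equivalence.

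The primary technical obstacle will be verifying the compatibility of the constructions under the identifications above --- specifically, establishing the Frobenius-twisted analog of \cref{lem:Injective map of local cohomology} needed to identify $(0:_{H^d_\fm(F^e_* R(K_R))} x)$ with $H^{d-1}_\fm(F^e_*(R/xR)(K_{R/xR}))$, and carefully tracking the Frobenius adjunction isomorphisms together with the descent of the twisting factor $g \mapsto \overline{g}$ so that the restriction $\Phi_R|_{x\text{-torsion}} = \Phi_{R/xR}$ holds on the nose; the $p^e$-th power appearing in the explicit formulas requires some care here, since naively restricting $g\,x^{p^e-1}\omega^{p^e}$ modulo $x$ vanishes, and the correct comparison must be made through the connecting homomorphism coming from the Matlis-dualized short exact sequence rather than through direct reduction modulo $x$.
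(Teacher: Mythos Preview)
There is a genuine gap at the very first reduction step. You write ``choose $e \gg 0$ sufficiently divisible so that $\Delta_e := (p^e-1)\Delta$ is integral and $(p^e-1)(K_R+\Delta) = \Div_R(g)$ is principal for some $g \in \Tot(R)^{\ast}$.'' But such an $e$ exists only when the $\Q$-Cartier index of $K_R+\Delta$ is prime to $p$; if $p$ divides the index, then $(p^e-1)(K_R+\Delta)$ is never principal for any $e$. In general one can only arrange that $(p^e-1)(K_R+\Delta)$ is an integral $p^{e_0}$-torsion divisor for some fixed $e_0$. The index-prime-to-$p$ case you have treated is precisely the case already settled by Schwede; the entire point of the theorem is to remove this restriction.

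Once the divisor $(p^e-1)(K_R+\Delta)$ is not principal, your twisting-by-$g$ isomorphism $R(\Delta_e + p^eK_R)\xrightarrow{\cdot g} R(K_R)$ no longer exists, and with it disappears your identification of the target $x$-torsion with $H^{d-1}_\fm(F^e_*(R/xR)(K_{R/xR}))$ via \cref{lem:Injective map of local cohomology} for $E=0$. Instead one must apply \cref{lem:Injective map of local cohomology} to the divisor $E = -(p^e-1)K_R - \Delta_e$, whose hypothesis requires that $R(E)_{V(x)}$ be reflexive as an $R/xR$-module. This is not automatic and is the crux of the matter: the paper establishes it (\cref{claim:thm-A-claim}) by induction on dimension, using the assumed $F$-purity of $R/xR$ together with \cref{lem:Direct summand p^e torsion} to split $H^2_\fm(R(E)_{V(x)}) \to F^e_*H^2_\fm(R_{V(x)})$ and force $H^2_\fm(R(E)) = 0$ via Nakayama. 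Your outline contains no analogue of this step, and the ``technical obstacle'' you flag at the end is in fact the missing idea, not a bookkeeping issue.
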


\begin{proof}
As in the proof of \Cref{lem:Direct summand p^e torsion} we may assume that $R$ is complete. If $\dim(R)\leq 2$ then the assumption that $R/xR$ is $(G_1)$ implies that the ring $R$ is Gorenstein since the property of being Gorenstein deforms \cite[\href{https://stacks.math.columbia.edu/tag/0BJJ}{Tag 0BJJ}]{stacks-project}. The theorem follows for rings of dimension at most $2$ by \cref{ex:Inversion of Adjunction F-purity Gorenstein rings}.

We assume that $\dim(R)\geq 3$. By induction on the dimension, we may assume that both $R$ and $R/xR$ are $F$-pure when localized at any non-maximal prime ideal in $V(x)$. Let $\Delta_e=(p^e-1)\Delta$ and consider the commutative diagram with exact rows:
\[
\begin{xymatrix}
{
0\ar[r] & F^e_*R(p^eK_R+\Delta_e) \ar[r]^{\cdot F^e_*x} & F^e_*R(p^eK_R+\Delta_e)\ar[r] & F^e_*R(p^eK_R+\Delta_e)_{V(x)}\ar[r] & 0 \\
0\ar[r] & R(K_R)\ar[r]^{\cdot x}\ar[u]^{\cdot F^e_*x^{p^e-1}} & R(K_R)\ar[r]\ar[u]^{F^e} & R(K_R)_{V(x)}\ar[r]\ar[u]^{F^e} & 0.
}
\end{xymatrix}
\]
If $e\gg 0$ and divisible then $(p^e-1)K_R+\Delta_e$ is an integral $p^{e_0}$-torsion divisor. By \Cref{lem:Injective map of local cohomology} applied to the divisor $E=0$, we obtain the following commutative diagram with exact rows:

{\tiny
\begin{equation}
\begin{xymatrix}
{
 & F^e_*H^{d-1}_\fm(R(p^eK_R+\Delta_e)_{V(x)})\ar[r]^{\xi} & F^e_*H^d_\fm(R(p^eK_R+\Delta_e))\ar[r]^{\cdot F^e_*x} & F^e_*H^d_\fm(R(p^eK_R+\Delta_e))\ar[r] & 0 \\
0\ar[r] & H^{d-1}_\fm(R(K_R)_{V(x)})\ar[r]\ar[u]^{F^e} & H^d_\fm(R(K_R))\ar[r]\ar[u]^{\cdot F^e_*x^{p^e-1}} & H^d_\fm(R(K_R))\ar[r]\ar[u]^{F^e} & 0.
}
\end{xymatrix}\label{eq:thm-A-diagram}
\end{equation}
}
The leftmost vertical map of \Cref{eq:thm-A-diagram} is isomorphic to 
\[
(R_{V(x)}\xrightarrow{F^e}F^e_*(R_{V(x)}(\Delta_e|_{\Div_R(x)})))\otimes_{R/xR} H^{d-1}_\fm(R_{V(x)}(K_{R/xR}))
\]
and the middle vertical map is isomorphic to
\[
(R\xrightarrow{\cdot F^e_*x^{p^e-1}}F^e_*R(\Delta_e))\otimes_R H^d_\fm(R(K_R)).
\]
The local cohomology modules $H^{d-1}_\fm(R(K_R)_{V(x)})$ and $ H^d_\fm(R(K_R))$ are essential extensions of the residue field $k$. If $(R,\Delta+\Div_R(x))$ is $F$-pure then the middle vertical map in (\ref{eq:thm-A-diagram}) is injective, from which it follows that the leftmost vertical map in (\ref{eq:thm-A-diagram}) is injective (\emph{i.e.} that $(R/xR,\Delta|_{\Div(x)})$ is $F$-pure). This concludes the proof of the forward direction.

We now prove the converse (\emph{i.e.} inversion of adjunction), so suppose for the remainder of the proof that $(R/xR,\Delta|_{\Div(x)})$ is $F$-pure. Once we know that the map $\xi$ in (\ref{eq:thm-A-diagram}) is injective, we will be able to conclude that $(R,\Delta+\Div_R(x))$ is $F$-pure by similar reasoning as above. This will follow from the next claim.

\begin{claim}\label{claim:thm-A-claim}
Suppose that $E$ is an integral torsion divisor of index $p^{e_0}$. Then $R(E)_{V(x)}$ is an $(S_2)$ $R/xR$-module.\label{claim:reflexification map is an isomorphism}
\end{claim}

\begin{proof}[Proof of \cref{claim:reflexification map is an isomorphism}]
 By assumption, $R$ is $F$-pure whenever we localize at a non-maximal prime ideal $\fp\in V(x)$. By \cref{lem:Direct summand p^e torsion}, the module $R(E)_{V(x)}$ localizes to an $(S_2)$ $R_{V(x)}$-module for all non-maximal primes $\fp\in V(x)$. Therefore it suffices to show that $R(E)_{V(x)}$ has depth at least $2$. Consider the following commutative diagram:
\[
\begin{xymatrix}
{
\, & \, & F^e_*R\ar[r] & F^e_*R_{V(x)}\ar[r] & 0\\
\, & \, & F^e_*R(p^eE)\ar[r]\ar[u]^{\cong} & F^e_*R(p^eE)_{V(x)}\ar[r]\ar[u]^{\cong}  & 0\\
0\ar[r] & R(E)\ar[r]^{\cdot x} & R(E)\ar[r]\ar[u]^{F^e} & R(E)_{V(x)}\ar[r]\ar[u]^{F^e} & 0.
}
\end{xymatrix}
\]
We cannot yet assert that the rightmost vertical map of the above diagram is split. However, we claim that the rightmost map is split after applying $H^2_\fm(-)$. Indeed, the reflexification map $R(E)_{V(x)}\to R_{V(x)}(E|_{V(x)})$ is an isomorphism on the punctured spectrum of $R/xR$. Therefore the induced map of local cohomology modules
\[
H^2_\fm(R(E)_{V(x)})\to H^2_\fm(R_{V(x)}(E|_{V(x)}))
\]
is an isomorphism. Our assumptions imply that $R/xR$ is $F$-pure. By \cref{lem:Direct summand p^e torsion}, $R_{V(x)}(E|_{V(x)})\to F^e_*R_{V(x)}(p^eE|_{V(x)})\cong F^e_*R_{V(x)}$ is split. Therefore 
\[
H^2_\fm(R(E)_{V(x)})\cong H^2_\fm(R_{V(x)}(E|_{V(x)}))\to F^e_*H^2_\fm(R_{V(x)}(p^eE|_{V(x)}))
\]
is split. Since $R/xR$ is assumed to be $(S_2)$, we know that $\depth(R)\geq 3$, hence $H^2_\fm(R)=0$. Now consider the resulting diagram of local cohomology modules
\begin{equation}
    \begin{xymatrix}
{
\, & \, & 0=F^e_*H^2_\fm(R)\ar[r] & F^e_*H^2_\fm(R_{V(x)})\\
\, & \, & F^e_*H^2_\fm(R(p^eE))\ar[r]\ar[u]^{\cong} & F^e_*H^2_\fm(R_{V(x)}(p^eE|_{V(x)}))\ar[u]^{\cong}\\
H^1_\fm(R(E)_{V(x)})\ar[r]^{\subseteq} & H^2_\fm(R(E))\ar[r]^{\cdot x} & H^2_\fm(R(E))\ar[r]\ar[u]^{F^e} & H^2_\fm(R_{V(x)}(E|_{V(x)}))\ar[u]^{F^e}.
}\label{eq:reflexification-claim-1}
\end{xymatrix}
\end{equation}

The composition of the right vertical maps in (\ref{eq:reflexification-claim-1}) is split and therefore injective. One verifies by chasing the diagram (\ref{eq:reflexification-claim-1}) that $H^2_\fm(R(E))\xrightarrow{\cdot x}H^2_\fm(R(E))$ is an onto map. Since $R(E)$ is an $(S_2)$ $R$-module, $H^2_\fm(R(E))$ is finitely generated by \Cref{lem:S2 has fg H2}. By Nakayama's Lemma we have $H^2_\fm(R(E))=0$, so $H^1_\fm(R(E)_{V(x)})=0$ as needed.
\end{proof}

Combining \Cref{claim:thm-A-claim} with \Cref{lem:Injective map of local cohomology} applied to the divisor $E=-(p^e-1)K_R-\Delta_e$ tells us that $\xi$ is injective. Again, using that $H^{d-1}_\fm(R(K_R)_{V(x)})$ and $ H^d_\fm(R(K_R))$ are essential extensions of $k$, the left vertical map of (\ref{eq:thm-A-diagram}) is injective if and only if the middle vertical map of (\ref{eq:thm-A-diagram}) is injective. Therefore $(R_{V(x)},\Delta|_{\Div_R(x)})$ is $F$-pure if and only if $(R,\Delta+\Div_R(x))$ is $F$-pure.
\end{proof}

\section{Adjunction of \texorpdfstring{$F$}{F}-purity along a divisor}
\label{sec:adjunction}

The primary objective of this section is to prove \cref{mainthm:adjunction} (adjunction of $F$-purity), but we first give a brief description of the different $\Diff_D(\Delta)$.

\subsection{The different}\label{subsec:different}
 Let $R$ be a reduced excellent $(S_2)$ and $(G_1)$ local ring, $D$ an effective integral $(S_2)$ and $(G_1)$ Weil divisor, and $\Delta$ an effective $\Q$-divisor with components disjoint from $D$ such that the divisor class $\Delta+D+K_R$ is $\Q$-Cartier of index $n$. We choose a canonical divisor $K_R$ so that $K_R=-D+G$ for some divisor $G$ with components disjoint from $D$. 

 In particular, if $U \subseteq \Spec(R)$ is a dense open subscheme that is regular, then $(K_R+D)|_U$ is Cartier with components disjoint from $D \cap U$.
 It follows that $(K_R + D)|_{D \cap U}$ is a canonical divisor on $D \cap U$.
 The section $1 \in R(G)$ gives rise to a rational section of $\omega_D$ via the restriction mappings $R(G) \to R(G)|_U \to \omega_{D}|_{D \cap U}$, and we denote by $K_D$ the corresponding canonical divisor. Note that if $D$ is Cartier in codimension $2$, then so too is $K_R + D$ as $D$ is $(G_1)$, and it follows that $K_D = (K_R + D)|_{D}$. 
 
 Suppose $n(K_R + D + \Delta) = \Div_R(f)$ for some $f \in R$, and let $\overline{f}$ be its image in $R_D$. The \emph{different of $\Delta$ along $D$} is the $\Q$-divisor $\Diff_D(\Delta) := \frac{1}{n}\Div_D(\overline{f}) - K_D$ of $R_D$. This is independent of the choices above; see \cite[\S~4.1]{Kol13} for further discussion. We recall some properties enjoyed by $\Diff_D(\Delta)$.
\begin{enumerate}
    \item $\Diff_D(\Delta)$ is effective and $0\sim mn(\Delta+D+K_R)|_D=mn(\Diff_D(\Delta)+K_D)$ for every $m\in\mathbb{Z}$; \label{different-1}
    \item Let $V$ be an irreducible codimension $1$ subvariety of $D$ and view $V$ simultaneously as an irreducible codimension $2$ subvariety of $\Spec(R)$. Then $\Diff_D(\Delta)$ is not supported at $V$ if and only if $R$ and $R_D$ are both regular at $V$ and $V\not\in \Supp(\Delta)$; \label{different-2}
    \item If $D$ is Cartier in codimension $2$ then $\Diff_D(\Delta)= \Delta|_D$.\label{different-3}
\end{enumerate}

\subsection{\texorpdfstring{$F$}{F}-purity} Let $(R,\fm,k)$ be a reduced local ring of prime characteristic $p>0$ with total ring of fractions $K$. Suppose that $R$ is $(S_2)$ and $(G_1)$. If $\Delta$ is a $\Q$-divisor then 
\[
R(\Delta)=\{f\in K\mid \Div_R(f)+\Delta\geq 0\}.
\]
If $\Delta=\sum r_i [R/P_i]$ and $\lfloor \Delta\rfloor =\sum \lfloor r_i\rfloor [R/P_i]$ is the round down divisor, then $R(\Delta)=R(\lfloor \Delta\rfloor)$. If $\Delta$ is effective then the pair $(R,\Delta)$ is \emph{$F$-pure} if for all sufficiently divisible $e\gg 0$ the Frobenius maps $R\to F^e_*R((p^e-1)\Delta)$ are pure. There are several competing notions of $F$-purity as it pertains to pairs --- we direct the reader to \cref{subsec:Closing Remarks} for more details. In particular, we provide a counterexample to sharp $F$-pure inversion of adjunction, at least at the level of generality of \cref{mainthm:inversion} (see \cref{ex:sharp-f-pure}).

\begin{lemma}
\label{lem:Equivalent definitions for an F-pure pair}
Let $(R,\fm,k)$ be an excellent $d$-dimmensional $(S_2)$ and $(G_1)$ local ring of prime characteristic $p>0$, $\Delta$ an effective $\Q$-divisor, and $K_R$ a choice of canonical divisor. Then the following are equivalent:
\begin{enumerate}
    \item for all sufficiently divisible $e\gg 0$ the maps 
    \[
    R\to F^e_*R((p^e-1)\Delta)
    \]
    are pure, i.e the pair $(R,\Delta)$ is $F$-pure.\label{lem:eqv-F-pure-1}
    \item There exists an $e_0$ so that for all sufficiently divisible $e\gg 0$ the maps 
    \[
    R\to F^{e+e_0}_*R(p^{e_0}(p^e-1)\Delta)
    \]
    are pure.\label{lem:eqv-F-pure-2}
    \item for all sufficiently divisible $e\gg 0$ the maps of local cohomology
    \[
    H^d_\fm(R(K_R))\to F^e_*H^d_\fm((p^e-1)\Delta+p^eK_R)\cong (R\to F^e_*R((p^e-1)\Delta))\otimes_R H^d_\fm(R(K_R))
    \]
    are injective.\label{lem:eqv-F-pure-3}
    \item There exists an $e_0$ so that for all sufficiently divisible $e\gg 0$ the maps 
    {\small
    \[
    H^d_\fm(R(K_R))\to F^{e+e_0}_*H^d_\fm(p^{e_0}(p^{e}-1)\Delta+p^{e+e_0}K_R)\cong (R\to F^{e+e_0}_*R(p^{e_0}(p^e-1)\Delta))\otimes_R H^d_\fm(R(K_R))
    \]
    }
    are injective.\label{lem:eqv-F-pure-4}
\end{enumerate}
\end{lemma}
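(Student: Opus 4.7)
The plan is to pair the four statements into two Matlis-dual pairs, $(1)\Leftrightarrow(3)$ and $(2)\Leftrightarrow(4)$, and then handle $(1)\Leftrightarrow(2)$ directly at the level of the primal maps.

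\textbf{Matlis-dual equivalences.} For both $(1)\Leftrightarrow(3)$ and $(2)\Leftrightarrow(4)$ I would apply the Hochster--Huneke purity criterion cited at the start of this section: $R\to M$ is pure if and only if $E_R(k) \to E_R(k)\otimes_R M$ is injective. In the $(S_2)$ and $(G_1)$ setting we may identify $E_R(k) = H^d_\fm(R(K_R))$ via local duality. For any divisor $D$ on $R$, the projection formula combined with reflexification of tensor products in codimension one (exactly as in the discussion preceding \cref{lem:Injective map of local cohomology}) gives
\[
E_R(k) \otimes_R F^e_* R(D) \cong F^e_* H^d_\fm\!\left(R(D + p^e K_R)\right),
\]
and the induced map on tensor products agrees with the Frobenius map on local cohomology appearing in $(3)$. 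Taking $D = (p^e-1)\Delta$ gives $(1)\Leftrightarrow(3)$, and taking $D = p^{e_0}(p^e-1)\Delta$ at Frobenius level $e+e_0$ gives $(2)\Leftrightarrow(4)$.

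\textbf{Primal equivalence.} The direction $(1)\Rightarrow(2)$ is by a factor-of-pure argument: for any $e_0 \geq 0$ and sufficiently divisible $e$, the $(2)$-map composed with the inclusion of fractional ideals $R(p^{e_0}(p^e-1)\Delta) \hookrightarrow R((p^{e+e_0}-1)\Delta)$, which holds since $p^{e_0}(p^e-1)=p^{e+e_0}-p^{e_0}\leq p^{e+e_0}-1$, recovers the $(1)$-map at level $e+e_0$, and a factor of a pure map is pure. For the converse $(2)\Rightarrow(1)$, one first notes that $(2)$ forces $R$ itself to be $F$-pure: the $(2)$-map factors through the Frobenius $R\to F^{e+e_0}_*R$, so the latter is a factor of a pure map, hence pure. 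Then for $e'\gg 0$ sufficiently divisible, write $e'=e+e_0$ and combine the $(2)$-splitting with an iterated Frobenius splitting arising from the $F$-purity of $R$ in order to build a splitting of $R\to F^{e'}_*R((p^{e'}-1)\Delta)$.

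\textbf{Main obstacle.} The crux lies in the $(2)\Rightarrow(1)$ construction: the $(2)$-splitting lands in $F^{e'}_*R((p^{e'}-p^{e_0})\Delta)$, while the $(1)$-target is the strictly larger $F^{e'}_*R((p^{e'}-1)\Delta)$, and these differ by the effective correction $(p^{e_0}-1)\Delta$. The main task is to show this fixed correction can be absorbed by the iteration. The cleanest strategy is to pass back to the Matlis-dual formulation $(4)\Rightarrow(3)$ and show that the image of the socle of $E_R(k)$ in $F^{e'}_*H^d_\fm(R((p^{e'}-p^{e_0})\Delta + p^{e'}K_R))$, which is nonzero by $(4)$, remains nonzero after the surjection onto $F^{e'}_*H^d_\fm(R((p^{e'}-1)\Delta + p^{e'}K_R))$ induced by the divisor inclusion; since the socle generator is Frobenius-generic, its image avoids the kernel of this surjection, which consists of classes coming from $H^{d-1}$ of a quotient module supported on $(p^{e_0}-1)\Delta$.
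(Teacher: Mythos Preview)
Your Matlis-dual equivalences $(1)\Leftrightarrow(3)$ and $(2)\Leftrightarrow(4)$, and your argument for $(1)\Rightarrow(2)$, match the paper's approach. The problem is $(2)\Rightarrow(1)$: the claim that the socle image ``avoids the kernel'' of the surjection
\[
F^{e'}_*H^d_\fm\bigl(R((p^{e'}-p^{e_0})\Delta + p^{e'}K_R)\bigr) \twoheadrightarrow F^{e'}_*H^d_\fm\bigl(R((p^{e'}-1)\Delta + p^{e'}K_R)\bigr)
\]
because it is ``Frobenius-generic'' is not an argument. That kernel is the image of an $H^{d-1}_\fm$ of a module supported on $\Delta$, and nothing you have established rules out the socle landing there.

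The paper sidesteps this entirely by moving in the opposite direction on $e$. The very factorization you used for $(1)\Rightarrow(2)$,
\[
R \;\to\; F^e_*R((p^e-1)\Delta) \;\to\; F^{e+e_0}_*R(p^{e_0}(p^e-1)\Delta) \;\subseteq\; F^{e+e_0}_*R((p^{e+e_0}-1)\Delta),
\]
already proves $(2)\Rightarrow(1)$: the $(2)$-map with parameters $(e,e_0)$ is precisely the composite of the first two arrows, so the $(1)$-map at level $e$ is a first factor of a pure map and hence pure. You tried to deduce $(1)$ at level $e' = e+e_0$ from $(2)$ at the same level, which forces you to close the gap between $(p^{e'}-p^{e_0})\Delta$ and $(p^{e'}-1)\Delta$; deducing $(1)$ at the lower level $e$ instead makes that gap vanish, and no appeal to $F$-purity of $R$ or socle-chasing is needed.
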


\begin{proof}
For all integers $e,e_0$ we can factor the map
\[
R\to F^{e+e_0}_*R((p^{e+e_0}-1)\Delta)
\]
as
\[
R\to F^e_*R((p^e-1)\Delta)\to F^{e+e_0}_*(p^{e_0}(p^e-1)\Delta)\subseteq F^{e+e_0}_*R((p^{e+e_0}-1)\Delta).
\]
This proves the equivalence of (\ref{lem:eqv-F-pure-1}) and (\ref{lem:eqv-F-pure-2}). 

Cokernels are preserved under tensor product and the top local cohomology of a module is the cokernel of the last non-trivial map of a \v{C}ech complex. Hence the isomorphisms described in (\ref{lem:eqv-F-pure-3}) and (\ref{lem:eqv-F-pure-4}) are valid. Furthermore, as $H^d_\fm(R(K_R))$ serves as the injective hull of the residue field of $R$, purity of the map in (\ref{lem:eqv-F-pure-1}) is equivalent to injectivity of the maps in (\ref{lem:eqv-F-pure-3}). (\ref{lem:eqv-F-pure-2}) $\Leftrightarrow$ (\ref{lem:eqv-F-pure-4}) is similar.
\end{proof}

Let $E$ be a $\Q$-divisor and $D$ an effective integral divisor of an $(S_2)$ and $(G_1)$ ring $R$. We use the following notation:
\[
R(E)_D:=\frac{R(E)}{R(E-D)}.
\]

\begin{lemma}
\label{lem:RD F-pure and torsion divisors of index p^e}
Let $(R,\fm,k)$ be a complete $d$-dimensional $(S_2)$ and $(G_1)$ local ring of prime characteristic $p>0$. Suppose that $D$ is an effective integral divisor such that the pair $(R,D)$ is $F$-pure. If $E$ is an integral Weil divisor of index $p^{e_0}$ then $R(E)_D$ is a direct summand of $F^e_*R_D$ for all sufficiently divisible $e\gg 0$.
\end{lemma}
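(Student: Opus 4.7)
The plan is to transfer the splitting provided by $F$-purity of $(R,D)$ to a splitting of $\bar{\iota}_E\colon R(E)_D \hookrightarrow F^e_*R_D$, where $\bar{\iota}_E$ is induced by Frobenius after the identification $R(p^eE) \cong R$ (valid for $e \geq e_0$ by $p^{e_0}$-torsion of $E$). By $F$-purity and completeness of $R$, for $e$ sufficiently divisible there is a splitting $\pi\colon F^e_*R((p^e-1)D) \to R$ of the inclusion $R \hookrightarrow F^e_*R((p^e-1)D)$, and the strategy is to tensor $\pi$ with $R(E)$, reflexify, use $p^eE \sim 0$ to identify fractional ideals, and then descend modulo $D$.

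The crucial technical step is the compatibility claim $\pi(F^e_*R(-D)) \subseteq R(-D)$, which I would prove using Grothendieck duality for the Frobenius. In the $(S_2)$ and $(G_1)$ setting,
\[
\Hom_R(F^e_*R((p^e-1)D), R) \cong F^e_*R(-(p^e-1)(K_R+D)),
\]
so $\pi \leftrightarrow F^e_*c$ with $\Div(c) \geq (p^e-1)(K_R+D)$, and $\pi(F^e_*f) = \operatorname{Tr}(F^e_*(cf))$ for the Grothendieck trace $\operatorname{Tr}\colon F^e_*R((1-p^e)K_R) \to R$. For $f \in R(-D)$, a divisor calculation gives $cf \in R((1-p^e)K_R - p^eD)$, and functoriality of the trace under reflexive tensor (equivalently, the projection formula applied to Frobenius) forces $\operatorname{Tr}(F^e_*R((1-p^e)K_R - p^eD)) \subseteq R(-D)$, yielding the desired containment.

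Granting the compatibility, tensor $\pi$ with $R(E)$ and reflexify to obtain $\tilde{\pi}\colon F^e_*R((p^e-1)D + p^eE) \to R(E)$, which by the same Grothendieck duality applied to the $R(E)$-valued trace $F^e_*R((1-p^e)K_R + p^eE) \to R(E)$ satisfies $\tilde{\pi}(F^e_*R(p^eE-D)) \subseteq R(E-D)$. Using a rational function $g \in \Tot(R)$ with $\Div(g) = -p^{e_0}E$ to identify $F^e_*R((p^e-1)D + p^eE) \cong F^e_*R((p^e-1)D)$ and $F^e_*R(p^eE-D) \cong F^e_*R(-D)$, we convert $\tilde{\pi}$ into a splitting of the Frobenius map $\iota_E\colon R(E) \to F^e_*R \hookrightarrow F^e_*R((p^e-1)D)$ that sends $F^e_*R(-D)$ into $R(E-D)$. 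Restricting $\tilde\pi$ to $F^e_*R$ and passing to the quotients by $F^e_*R(-D)$ and $R(E-D)$---valid because $(R,D)$ being $F$-pure forces $D$ to be reduced, so that the kernel of $R(E) \to F^e_*R \twoheadrightarrow F^e_*R_D$ is precisely $R(E-D)$---produces the desired splitting $\bar{\tilde{\pi}}\colon F^e_*R_D \to R(E)_D$ of $\bar{\iota}_E$. The principal obstacle is the duality-based compatibility claim; once in place, the bookkeeping for the descent is routine.
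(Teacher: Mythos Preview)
Your proposal is correct and follows the same overall architecture as the paper: take a splitting $\pi$ of $R\to F^e_*R((p^e-1)D)$, show it is compatible with the ideal $R(-D)$, twist by $R(E)$, and pass to quotients using $p^eE\sim 0$.

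The one substantive difference is in how you establish the compatibility $\pi(F^e_*R(-D))\subseteq R(-D)$. You go through Grothendieck duality and the trace, whereas the paper does this in one line: tensor $\pi\colon F^e_*R((p^e-1)D)\to R$ with $R(-D)$ and reflexify. Since $(F^e_*R((p^e-1)D)\otimes_R R(-D))^{**}\cong F^e_*R(-D)$ and reflexive sheaves are determined in codimension~$1$ (where $R(-D)$ is invertible), the resulting map $F^e_*R(-D)\to R(-D)$ agrees with the restriction of $\pi$. The same one-line trick, applied with $R(E)$ and then with $R(E-D)$, simultaneously gives $\tilde\pi$ and the inclusion $\tilde\pi(F^e_*R(p^eE-D))\subseteq R(E-D)$, so you do not need to re-invoke duality for the twisted compatibility either. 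Your duality argument ultimately rests on the same tensor-and-reflexify principle (you need it to show $\operatorname{Tr}(F^e_*R((1-p^e)K_R-p^eD))\subseteq R(-D)$), so the paper's route is strictly shorter.

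Finally, your aside that ``$(R,D)$ $F$-pure forces $D$ reduced, so the kernel of $R(E)\to F^e_*R_D$ is $R(E-D)$'' is unnecessary: once $\tilde\pi$ descends to $F^e_*R_D\to R(E)_D$, the induced map $R(E)_D\to F^e_*R_D$ is split injective automatically, and no separate injectivity check is needed.
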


\begin{proof}
For all sufficiently divisible $e\gg 0$ the composition of the maps
\[
R\to F^e_*R\to F^e_*R((p^e-1)D)
\]
are pure since we are assuming the pair $(R,D)$ is $F$-pure. Let $\varphi_e: F^e_*R((p^e-1)D)\to R$ be a splitting of $R\to F^e_*(R(p^e-1)D)$. Then $\varphi_e(F^e_*R(-D))=R(-D)$. To see this, simply tensor $\varphi_e: F^e_*R((p^e-1)D)\to F^e_*R$ with $R(-D)$ and reflexify. Therefore there are commutative diagrams
\[
\begin{xymatrix}
{
0 \ar[r] & R(-D)\ar[r] & R \\ 
0\ar[r] & F^e_*R(-D)\ar[r]\ar[u] & F^e_*R \ar[u]^{\varphi_e} \\
0 \ar[r] & R(-D)\ar[u]\ar[r] & R\ar[u]
}
\end{xymatrix}
\]
and the composition of the vertical maps are the identity maps on their respective modules. Therefore if we tensor by $R(E)$ and reflexify we find that there is a commutative diagram
\[
\begin{xymatrix}
{
0 \ar[r] & R(E-D)\ar[r] & R(E) \\ 
0\ar[r] & F^e_*R(p^eE-D)\ar[r]\ar[u] & F^e_*R(p^eE) \ar[u]^{\tilde{\varphi}_e} \\
0\ar[r] & F^e_*R(-D)\ar[r]\ar[u]_{\cong} & F^e_*R. \ar[u]_{\cong} 
}
\end{xymatrix}
\]
The maps $\tilde{\varphi}_e$ are splittings of $R(E)\to F^e_*R(p^eE)$ and commutativity of the above diagram provides to us an inclusion
\[
\tilde{\varphi}_e(F^e_*R(p^eE-D))\subseteq R(E-D).
\]
It follows that we can restrict $\varphi_e$ to $F^e_*R(p^eE)_D\cong F^e_*R_D$ and produce maps $\tilde{\varphi}_{e,D}:F^e_*R_D\to R(E)_D$ that are splittings of 
\[
R(E)_D\to F^e_*R_D.
\]
In particular, $R(E)_D$ can be realized as a direct summand of $F^e_*R_D$ as claimed.
\end{proof}

\begin{corollary}
\label{cor:RD F-pure and torsion divisors of index p^e}
Let $(R,\fm,k)$ be an excellent $d$-dimensional $(S_2)$ and $(G_1)$ local ring of prime characteristic $p>0$. Suppose that $D$ is an effective integral divisor such that the pair $(R,D)$ is $F$-pure and $R_D$ is $(S_2)$ and $(G_1)$. If $E$ is an integral Weil divisor of index $p^{e_0}$ then the reflexification map $R(E)_D\to R_D(E|_D)$ is an isomorphism. 
\end{corollary}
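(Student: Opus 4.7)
The plan is to reduce to the case $R$ complete, apply \cref{lem:RD F-pure and torsion divisors of index p^e}, and then invoke Hartshorne's equivalence between reflexive and $(S_2)$ modules over an $(S_2)$ and $(G_1)$ ring. First, checking that the reflexification map is an isomorphism is preserved under the faithfully flat base change $R \to \hat R$, and all of the standing hypotheses — $(S_2)$ and $(G_1)$ on $R$ and $R_D$, the torsion index of $E$, and $F$-purity of the pair $(R,D)$ — pass to $\hat R$ by \cite[Corollary 2.3]{MP21} together with the usual behavior of Serre conditions under completion of an excellent ring. So we may assume $R$ is complete.

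With $R$ complete, \cref{lem:RD F-pure and torsion divisors of index p^e} exhibits $R(E)_D$ as an $R_D$-direct summand of $F^e_*R_D$ for all sufficiently divisible $e \gg 0$. A sequence $x_1,\ldots,x_t$ in $R_D$ is regular on $R_D$ if and only if $x_1^{p^e},\ldots,x_t^{p^e}$ is, and the latter is equivalent to $x_1,\ldots,x_t$ being regular on $F^e_*R_D$ viewed as an $R_D$-module. Therefore the Serre condition $(S_2)$ and the property of having no embedded associated primes both transfer from $R_D$ to $F^e_*R_D$, and hence to the summand $R(E)_D$. Choosing by prime avoidance a representative of $E$ with components disjoint from $D$, at each minimal prime $\fq$ of $R_D$ — corresponding to a height-one prime $\fp \subseteq R$ lying on $D$ at which $R_\fp$ is $1$-dimensional Gorenstein by $(G_1)$ — one has $R(E)_\fp = R_\fp$, and hence $(R(E)_D)_\fq = (R_D)_\fq$. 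In particular, $R(E)_D$ is torsion-free of generic rank one.

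Finally, since $R_D$ is $(S_2)$ and $(G_1)$, Hartshorne's reflexivity criterion (see \cite[Theorem 1.9]{Har94} or the summary in \cite[Appendix A]{MP21}) ensures that any finitely generated torsion-free $R_D$-module satisfying $(S_2)$ is reflexive. Thus $R(E)_D$ is reflexive over $R_D$. Since $R_D(E|_D)$ is by construction the $R_D$-reflexification of $R(E)_D$, the canonical comparison map $R(E)_D \to R_D(E|_D)$ is an isomorphism. I anticipate no serious obstacle: the substantive input is \cref{lem:RD F-pure and torsion divisors of index p^e}, and what remains is the standard dictionary between $F$-splittings, Serre's condition $(S_2)$, and reflexivity on $(G_1)$ schemes, together with some bookkeeping around the choice of representative for $E$.
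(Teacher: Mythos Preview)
Your proposal is correct and follows essentially the same route as the paper: apply \cref{lem:RD F-pure and torsion divisors of index p^e} to realize $R(E)_D$ as a direct summand of $F^e_*R_D$, inherit $(S_2)$ from $R_D$, and conclude that the reflexification map is an isomorphism. You supply more detail than the paper does---an explicit reduction to the complete case (which \cref{lem:RD F-pure and torsion divisors of index p^e} technically requires) and an explicit invocation of Hartshorne's reflexivity criterion---but the argument is the same.
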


\begin{proof}
By \cref{lem:RD F-pure and torsion divisors of index p^e} the module $R(E)_D$ is a direct summand of $F^e_*R_D$. We are assuming that $R_D$ is $(S_2)$ and $(G_1)$. Therefore $F^e_*R_D$ is $(S_2)$ and so is any direct summand, hence the relexification map $R(E)_D\to R_D(E|_D)$ is an isomorphism.
\end{proof}

\begin{lemma}
\label{lem:R has to be S3 sort of}
    Let $(R,\fm,k)$ be an $(S_2)$ and $(G_1)$ $d$-dimensional local ring of prime characteristic $p>0$. Suppose that there exists a nonzero reduced $\Q$-Cartier divisor $D$ such that $(R,D)$ is $F$-pure. If $E$ is a $\Q$-Cartier divisor then
    \[
    \depth(R(E))\geq \min\{3,d\}.
    \]
\end{lemma}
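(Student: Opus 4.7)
I would begin by reducing to the case where $R$ is complete, using that the hypotheses are stable under completion (as in the proof of \Cref{lem:Direct summand p^e torsion}). The statement is trivial when $d \leq 2$, so assume $d \geq 3$. Since $R(E)$ is a reflexive module over an $(S_2)$ and $(G_1)$ ring, it is automatically $(S_2)$; hence $H^0_\fm(R(E)) = H^1_\fm(R(E)) = 0$, and the goal becomes to show $H^2_\fm(R(E)) = 0$. Moreover, \Cref{lem:S2 has fg H2} applied to the $(S_2)$ module $R(E)$ shows that $H^2_\fm(R(E))$ has finite length, so Nakayama's lemma will be available.

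For $e \gg 0$ sufficiently divisible so that $(p^e - 1)D$ and the non-$p$-torsion portion of $(p^e - 1)E$ are Cartier, the $F$-purity of $(R,D)$ provides an $R$-module splitting $\varphi_e \colon F^e_* R \to R$ of the map $R \xrightarrow{\cdot F^e_* f_1} F^e_* R$, where $f_1 \in R$ satisfies $\Div_R(f_1) = (p^e-1)D$. (The $p$-power torsion portion of the divisor class is handled via the direct summand argument of \Cref{lem:Direct summand p^e torsion}.) The key compatibility $\varphi_e(F^e_* R(-p^e D)) \subseteq R(-D)$ built into pair-$F$-purity allows us to tensor $\varphi_e$ with $R(E-D)$, $R(E)$, and $R(E)_D := R(E)/R(E-D)$ and reflexify, producing a commutative ladder of split injections
\[
\begin{xymatrix}
{
0 \ar[r] & R(E - D) \ar[r] \ar[d] & R(E) \ar[r] \ar[d] & R(E)_D \ar[r] \ar[d] & 0 \\
0 \ar[r] & F^e_* R(E - D) \ar[r] & F^e_* R(E) \ar[r] & F^e_* R(E)_D \ar[r] & 0
}
\end{xymatrix}
\]
that exhibits each of the three modules as a direct summand of its $F^e_*$-pushforward. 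In particular, this extends \Cref{lem:RD F-pure and torsion divisors of index p^e} to the $\Q$-Cartier setting by identifying $R(E)_D$ as a direct summand of $F^e_* R_D$.

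The final step --- and the main obstacle --- is converting the summand structure into the vanishing $H^2_\fm(R(E)) = 0$. Let $f \in R$ be a nonzerodivisor with $\Div_R(f) = nD$, where $n$ is the index of $D$, so that multiplication by $f$ induces an isomorphism $R(E-nD) \xrightarrow{\cong} R(E)$. The short exact sequence $0 \to R(E) \xrightarrow{\cdot f} R(E) \to R(E)/fR(E) \to 0$ yields a long exact sequence of local cohomology containing the endomorphism $H^2_\fm(R(E)) \xrightarrow{\cdot f} H^2_\fm(R(E))$; by Nakayama, the vanishing $H^2_\fm(R(E)) = 0$ will follow once this endomorphism is shown to be surjective, equivalently once $H^2_\fm(R(E)/fR(E)) = 0$. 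I would establish this by filtering $R(E)/fR(E)$ along the chain $R(E) \supset R(E-D) \supset \cdots \supset R(E-nD) = fR(E)$ whose successive quotients are the modules $R(E-kD)_D$, each of which is a direct summand of $F^e_* R_D$ by the ladder constructed above. A chase through the corresponding long exact sequences of local cohomology, combined with the cyclic identification $R(E-nD) \cong R(E)$, then forces the vanishing, completing the proof.
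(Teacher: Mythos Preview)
Your proposal has a genuine gap in the final step, and the route diverges from the paper's argument.

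First, a notational slip: in your ladder the bottom row should read $F^e_*R(p^eE-D) \to F^e_*R(p^eE) \to F^e_*R(p^eE)_D$; tensoring $\varphi_e$ with $R(E)$ and reflexifying lands in $F^e_*R(p^eE)$, not $F^e_*R(E)$. More seriously, your claimed extension of \cref{lem:RD F-pure and torsion divisors of index p^e} is not justified: the ladder only exhibits $R(E)_D$ as a summand of $F^e_*R(p^eE)_D$, and $R(p^eE)_D \cong R_D$ holds exactly when $E$ is $p$-power torsion. For a general $\Q$-Cartier $E$, the class of $p^eE$ stabilizes to a nonzero class of index prime to $p$, so the identification fails.

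Most importantly, the ``chase'' at the end does not go through. To show $\cdot f$ is surjective on $H^2_\fm(R(E))$ via your filtration you would need each inclusion-induced map $H^2_\fm(R(E-kD)) \to H^2_\fm(R(E-(k-1)D))$ to be surjective, i.e.\ each restriction $H^2_\fm(R(E-(k-1)D)) \to H^2_\fm(R(E-(k-1)D)_D)$ to vanish. Nothing in your setup forces this: knowing the quotient is a summand of some Frobenius pushforward gives no control on its $H^2_\fm$, since you have no hypothesis whatsoever on $R_D$. (A similar filtration \emph{does} work later in the paper for the $p$-power torsion inversion corollary, but only under the extra assumption $\depth R_\fp \geq \min\{3,\height\fp\}$ together with $F$-purity of $R_D$ --- precisely what is unavailable here.)

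The paper's proof avoids $R(E)_D$ entirely. Purity of $R \to F^e_*R((p^e-1)D)$, tensored with $R(E)$ and reflexified, yields a pure map $R(E) \to F^e_*R(p^eE) \subseteq F^e_*R(p^eE+(p^e-1)D)$ and hence an injection on $H^2_\fm$. One then shows the second factor $H^2_\fm(R(p^eE)) \to H^2_\fm(R(p^eE+(p^e-1)D))$ is the zero map for $e \gg 0$: since $E$ is $\Q$-Cartier, $R(p^eE) \cong R(p^{e_0}E)$ for a fixed $e_0$ and all sufficiently divisible $e$, and under this identification the map factors through multiplication by $f^{q_e}$ on the \emph{fixed} finite-length module $H^2_\fm(R(p^{e_0}E))$, with $q_e \to \infty$. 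This is where the $\Q$-Cartier hypotheses on both $E$ and $D$ are genuinely used, and it is the step your outline is missing.
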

\begin{proof}
    We are assuming that $R$ is $(S_2)$. Therefore we may assume $R$ is of dimension at least $3$ and show that $H^2_\fm(R(E))=0$. The composition of maps
    \[
    R\to F^e_*R\subseteq F^e_*R((p^e-1)D)
    \]
    is pure. Tensor with $R(E)$ and reflexify to conclude the composition of maps
    \[
    R(E)\to F^e_*R(p^eE)\subseteq F^e_*R(p^eE+(p^e-1)D)
    \]
    is pure. Consequently, the composition of maps of local cohomology modules
    \[
    H^2_\fm(R(E))\to H^2_\fm(F^e_*R(p^eE))\subseteq H^2_\fm(F^e_*R(p^eE+(p^e-1)D))
    \]
    is injective. To conclude that $H^2_\fm(R(E))=0$, it suffices to show that the inclusion 
    \[
    R(p^eE)\subseteq R(p^eE+(p^e-1)D)
    \]
    induces the $0$-map of local cohomology modules 
    \[
    H^2_\fm(R(p^eE))\to H^2_\fm(R(p^eE+(p^e-1)D))
    \]
    for all sufficiently divisible $e\gg 0$.

The divisor $E$ is $\Q$-Cartier. Therefore there exists an $e_0$ so that for all sufficiently divisible $e\gg 0$ we have $R(p^eE)\cong R(p^{e_0}E)$. Let $g_e$ be an element of the total ring of fractions of $R$ so that $R(p^eE)\xrightarrow{\cdot g_e}R(p^{e_0}E)$ is an isomorphism. Suppose that $D$ has $\Q$-Cartier index $n$ and $nD=\Div(f)$. For all $e\gg 0$ there exists $q_e\geq 1$ and $0\leq r_e<n$ such that $p^e-1=q_en+r_e$. Observe that $q_e\to \infty$ as $e\to \infty$. There are commutative diagrams:
    \[
    \begin{xymatrix}
        {
        R(p^eE)\ar[d]_{\cdot g_e}^{\cong}\ar[rr]^{\hspace{-3em}\subseteq}& & R(p^eE+(p^e-1)D)\ar[d]_{\cong}^{\cdot g_ef^{q_e}} \\
        R(p^{e_0}E)\ar[r]^{\cdot f^{q_e}} &R(p^{e_0}E)\ar[r]^{\subseteq} 
        & R(p^{e_0}E+r_eD).
        }
    \end{xymatrix}
    \]
    Consider the induced maps of local cohomology modules:
     \[
    \begin{xymatrix}
        {
        H^2_\fm(R(p^eE))\ar[d]^{\cong}_{\cdot g_e}\ar[rr]& &H^2_\fm(R(p^eE+(p^e-1)D))\ar[d]_{\cong}^{\cdot g_ef^{q_e}} \\
        H^2_\fm(R(p^{e_0}E))\ar[r]^{\cdot f^{q_e}} &H^2_\fm(R(p^{e_0}E))\ar[r]& H^2_\fm(R(p^{e_0}E+r_eD)). \\
        }
    \end{xymatrix}
    \]
    The module $R(p^{e_0}E)$ is $(S_2)$, $f\in \fm$, and therefore $H^2_\fm(R(p^{e_0}E))\xrightarrow{\cdot f^{q_e}}H^2_\fm(R(p^{e_0}E))$ is the $0$-map for all $e\gg 0$. Therefore the map $H^2_\fm(R(p^{e}E))\to H^2_\fm(R(p^{e}E+(p^e-1)D))$ is the $0$-map as it can be factored through the $0$-map.
\end{proof}

\subsection{Adjunction of \texorpdfstring{$F$}{F}-purity}

\begin{theorem}[Adjunction of $F$-Purity]
\label{thm:Adjunction of purity}
Let $(R,\fm,k)$ be an excellent $d$-dimensional $(S_2)$ and $(G_1)$ local ring of prime characteristic $p>0$. Suppose that $D$ is an integral $(S_2)$ and $(G_1)$ divisor. Let $\Delta$ be an effective $\Q$-divisor of $R$ with components disjoint from $D$ such that $(p^e-1)\Delta$ is integral for all sufficiently divisible $e\gg 0$. Suppose that $\Delta+D+K_R$ is $\Q$-Cartier. If $(R,\Delta+D)$ is $F$-pure then $(R_D,\Diff_D(\Delta))$ is $F$-pure.
\end{theorem}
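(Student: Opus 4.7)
The plan is to mirror the diagrammatic strategy of \cref{ex:Inversion of Adjunction F-purity Gorenstein rings} and \cref{mainthm:principal with boundary}: build a commutative square of short exact sequences and compare top local cohomology of $R$-modules with top local cohomology of $R_D$-modules. We may assume $R$ is complete, and \cref{lem:Equivalent definitions for an F-pure pair} rephrases the $F$-purity of $(R,\Delta+D)$ (respectively $(R_D,\Diff_D(\Delta))$) as the injectivity of the natural Frobenius map on $H^d_\fm(R(K_R))$ (respectively on $H^{d-1}_\fm(R_D(K_D))$) for all sufficiently divisible $e \gg 0$. Note that $(R,\Delta+D)$ being $F$-pure forces $(R,D)$ to be $F$-pure, since the natural inclusion $F^e_*R((p^e-1)D) \hookrightarrow F^e_* R((p^e-1)(\Delta+D))$ allows a splitting of the latter Frobenius map to restrict to a splitting of the former; consequently \cref{cor:RD F-pure and torsion divisors of index p^e} applies and provides the $(S_2)$-identifications $R(E)_D \cong R_D(E|_D)$ for $p$-power torsion divisors $E$ that we will need.

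The central construction is the commutative diagram with exact rows
\[
\xymatrix{
0 \ar[r] & R(K_R) \ar[r]\ar[d] & R(K_R+D) \ar[r]\ar[d] & Q_0 \ar[r]\ar[d] & 0 \\
0 \ar[r] & F^e_*R(K_R + (p^e-1)(K_R+\Delta+D)) \ar[r] & F^e_*R(K_R + D + (p^e-1)(K_R+\Delta+D)) \ar[r] & F^e_* Q_e \ar[r] & 0,
}
\]
where the rows arise by tensoring (and reflexifying) the inclusion $R(-D) \hookrightarrow R$ with the appropriate rank-one reflexive fractional ideals, and the vertical maps are the $e$-th Frobenius followed by the inclusion afforded by the effective divisor $(p^e-1)(\Delta+D)$ (respectively $(p^e-1)\Delta$) on each of the first two columns. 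Using the defining relation of the different---namely, that writing $n(K_R + \Delta + D) = \Div_R(f)$ we have $n(K_D + \Diff_D(\Delta)) = \Div_D(\bar f)$---the quotient $Q_e$ agrees in codimension one of $D$ with the reflexive fractional ideal $R_D(K_D + (p^e-1)(K_D + \Diff_D(\Delta)))$. By \cref{lem:local cohomology and reflexification} together with \cref{cor:RD F-pure and torsion divisors of index p^e} we obtain a canonical isomorphism
\[
H^{d-1}_\fm(Q_e) \;\cong\; H^{d-1}_\fm(R_D(K_D + (p^e-1)(K_D + \Diff_D(\Delta))))
\]
via which the left vertical map (after $H^*_\fm$) is identified with the Frobenius map characterizing $F$-purity of $(R_D, \Diff_D(\Delta))$.

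Applying $H^*_\fm$ to the above diagram, the middle vertical map is injective by hypothesis and we seek injectivity of the left vertical map. By commutativity of the Frobenius square, the kernel of this left vertical map is contained in the kernel of the connecting homomorphism $\delta_0 \colon H^{d-1}_\fm(Q_0) \to H^d_\fm(R(K_R))$, and since $H^{d-1}_\fm(Q_0)$ is an essential extension of $k$ the problem reduces to verifying that $\delta_0$ is nonzero on the socle---equivalently, that the image of the long-exact-sequence map $H^{d-1}_\fm(R(K_R+D)) \to H^{d-1}_\fm(Q_0)$ does not contain the socle. I expect this socle-avoidance to be the principal obstacle, and anticipate addressing it by an induction on $\dim R$ parallel to the proof of \cref{mainthm:principal with boundary}: both the $F$-purity hypothesis and the $(S_2)$-$(G_1)$ properties localize well at non-maximal primes, so the inductive hypothesis will supply the additional depth and vanishing properties for $R(K_R+D)$ on the punctured spectrum needed to force the required codimension-three triviality of the image.
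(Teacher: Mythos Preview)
Your overall architecture matches the paper's: build the ladder of short exact sequences, pass to local cohomology, and use injectivity of the Frobenius on $H^d_\fm(R(K_R))$ to force injectivity on $H^{d-1}_\fm(R_D(K_D))$. You also correctly isolate the crucial step as the injectivity of the connecting map $\delta_0 \colon H^{d-1}_\fm(Q_0) \to H^d_\fm(R(K_R))$.

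The gap is that you do not actually prove this injectivity. Your proposed induction on $\dim R$ is vague, and more importantly it is aiming at the wrong target: the ``codimension-three triviality'' and extra depth for $R(K_R+D)$ on the punctured spectrum that you anticipate needing are hypotheses of the \emph{inversion} direction (\cref{mainthm:inversion}), not of adjunction. There is no reason to expect them here, and no induction is required. The paper instead obtains injectivity of $\delta_0$ in one stroke by Matlis duality: applying $\Hom_R(-,H^d_\fm(R(K_R)))$ to the short exact sequence $0 \to R(-D) \to R \to R_D \to 0$ yields the short exact sequence
\[
0 \to H^{d-1}_\fm(R_D(K_D)) \to H^d_\fm(R(K_R)) \to H^d_\fm(R(K_R+D)) \to 0,
\]
which is precisely the statement that $\delta_0$ is injective. (This is the same mechanism as in \cref{lem:Injective map of local cohomology} for the principal case, which you cite but do not exploit here.) The analogous Matlis dual of $0 \to R(-\Delta_e - D) \to R(-\Delta_e) \to R_D(-\Delta_e|_D) \to 0$ --- the last term being identified via \cref{cor:RD F-pure and torsion divisors of index p^e}, exactly as you note --- gives injectivity of $\delta_e$ as well. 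With both connecting maps injective, the left vertical map is simply the restriction of the middle one, and the conclusion is immediate. No induction, no depth estimates, no socle chase beyond the trivial one.
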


\begin{proof}
There exists an integer $e_0$ so that for all sufficiently divisible $e\gg 0$ 
\[
\Delta_e:=(p^e-1)(\Delta+D+K_R)
\]
is an integral divisor of index $p^{e_0}$. By \cref{lem:RD F-pure and torsion divisors of index p^e} $R(-\Delta_e)_D$ is a direct summand of $F^e_*R_D$ for all sufficiently divisible $e\gg 0$. Therefore $R(-\Delta_e)_D$ is an $(S_2)$ $R_D$-module, \emph{i.e.}\footnote{Notice that this isomorphism requires that $D$ is $(G_1)$.}
\[
R(-\Delta_e)_D\cong R_D(-\Delta_e|_D)=R_D(-(p^e-1)(\Diff_D(\Delta)+K_D)).
\]

We have shown the existence of the following short exact sequences for all sufficiently divisible $e\gg 0$:
\begin{align}
    \label{eq:important SES}
    0\to R(-\Delta_e-D)\to R(-\Delta_e)\to R_D(-\Delta_e|_D)\to 0.
\end{align}
The Matlis dual of (\ref{eq:important SES}) produces the short exact sequence
\begin{align}
\label{eq:SES of lc 1}
0\to H^{d-1}_\fm(R_D(\Delta_e|_D+K_D))\to H^d_\fm(R(\Delta_e+K_R))\to H^d_\fm(R(\Delta_e+D+K_R))\to 0,
\end{align}
\emph{i.e.} the kernel of the natural map 
\[
H^d_\fm(R(\Delta_e+K_R))\to H^d_\fm(R(\Delta_e+D+K_R))
\]
induced by the inclusion $R(\Delta_e+K_R)\subseteq R(\Delta_e+D+K_R)$ is $H^{d-1}_\fm(R_D(\Delta_e|_D+K_D)$.
Similarly, the Matlis dual of the short exact sequence
\[
0\to R(-D)\to R\to R_D\to 0
\]
is 
\begin{align}
\label{eq:SES of lc 2}
0\to H^{d-1}_\fm (R_D(K_D))\to H^d_\fm(R(K_R))\to H^d_\fm(R(D+K_R))\to 0.
\end{align}
Therefore the kernel of $H^d_\fm(R(K_R))\to H^d_\fm(R(K_R+D))$ induced from the inclusion $R(K_R)\subseteq R(K_R+D)$ is $H^{d-1}_\fm(R_D(K_D))$.

Consider the following commutative diagrams with exact rows:
\[
\begin{xymatrix}
{
    0\ar[r] & F^{e}_*R(\Delta_e+K_R)\ar[r] & F^{e}_*R(\Delta_e+D+K_R)\ar[r] &  F^{e}_*R(\Delta_e+D+K_R)_D \ar[r] & 0 \\ 
0\ar[r] & R(K_R)\ar[r]\ar[u]_{F^e} & R(D+K_R)\ar[r]\ar[u]_{F^e} & R(D+K_R)_D\ar[r]\ar[u]_{F^e} & 0.
}
\end{xymatrix}
\]
The short exact sequences (\ref{eq:SES of lc 1}) and (\ref{eq:SES of lc 2}) give to us commutative diagrams of local cohomology modules with exact rows
{\footnotesize
\begin{align}
\label{eq:cd of lc}
\begin{xymatrix}
{
    0\ar[r] & F^{e}_*H^{d-1}_\fm(R_D(\Delta_e|_D+K_D))\ar[r] & F^{e}_*H^d_\fm(R(\Delta_e+K_R))\ar[r] &  F^{e}_*H^d_\fm(R(\Delta_e+D+K_R)) \ar[r] & 0 \\ 
0\ar[r] & H^{d-1}_\fm(R_D(K_D))\ar[r]\ar[u]_{F^e} & H^d_\fm(R(K_R))\ar[r]\ar[u]_{F^e} & H^d_\fm(R(D+K_R))\ar[r]\ar[u]_{F^e} & 0.
}
\end{xymatrix}
\end{align}
}
In light of \cref{lem:local cohomology and reflexification}, the left vertical map in (\ref{eq:cd of lc}) is isomorphic to $$(R_D\to F^e_*R((p^e-1)\Diff_D(\Delta)))\otimes_R H^{d-1}_\fm(R_D(K_D))$$ and the right vertical map is isomorphic to $(R\to F^e_*R((p^e-1)(\Delta+D)))\otimes_R H^d_\fm(R(K_R))$. We are assuming that $(R,\Delta+D)$ is $F$-pure, therefore the middle map is injective by \cref{lem:Equivalent definitions for an F-pure pair}(\ref{lem:eqv-F-pure-3}). Since the middle vertical map is injective, so too is the left vertical map. Therefore the pair $(R_D,\Diff_D(\Delta))$ is $F$-pure by \cref{lem:Equivalent definitions for an F-pure pair}(\ref{lem:eqv-F-pure-1}).
\end{proof}

\begin{remark}
The proof of \cref{thm:Adjunction of purity} contains an important observation that will be necessary to proving the theorem's converse. Namely, if $(R,\Delta+D)$ is $F$-pure and $(p^e-1)\Delta$ is integral for all sufficiently divisible $e\gg 0$ then
\[
R(-\Delta_e)|_D\to R_D(-\Delta_e|_D)
\]
is an isomorphism for all such $e$.
\end{remark}

\section{Inversion of Adjunction of \texorpdfstring{$F$}{F}-purity}
\label{sec:inversion}

\begin{lemma}[Key Lemma]
\label{lem:How to prove inversion of adjunction}
Let $(R,\fm,k)$ be an excellent $d$-dimensional $(S_2)$ and $(G_1)$ local ring of prime characteristic $p>0$. Suppose that $D$ is a reduced $(S_2)$ and $(G_1)$ divisor. Let  $\Delta$ be an effective $\Q$-divisor with components disjoint from $D$. Suppose that $\Delta+D+K_R$ is $\Q$-Cartier and $(p^e-1)\Delta$ is integral for all sufficiently divisible $e\gg 0$. Suppose that $(R_D,\Diff_D(\Delta))$ is $F$-pure. Then the following are equivalent:
\begin{enumerate}
    \item The pair $(R,\Delta+D)$ is $F$-pure;\label{key-lemma-condition-1}
    \item If $E$ is a $p$-power torsion divisor then $R(E)_D=R(E)/R(E-D)$ is an $(S_2)$ $R_D$-module.\label{key-lemma-condition-2}
\end{enumerate}
\end{lemma}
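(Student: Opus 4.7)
The plan is to handle the two implications separately, using machinery already developed in the paper. The forward direction (1) $\Rightarrow$ (2) is essentially a restatement of \cref{cor:RD F-pure and torsion divisors of index p^e} once one observes that $(R, D)$ inherits $F$-purity from $(R, \Delta + D)$. The reverse direction (2) $\Rightarrow$ (1) proceeds by running the diagram argument of \cref{thm:Adjunction of purity}: hypothesis (2) furnishes the short exact sequence which was a \emph{consequence} of $F$-purity in that proof, and then an essentiality argument parallel to the one used in \cref{mainthm:principal with boundary} transfers the injectivity furnished by $F$-purity of $(R_D, \Diff_D(\Delta))$ to the Frobenius map needed to witness $F$-purity of $(R, \Delta + D)$.

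\textbf{Direction (1) $\Rightarrow$ (2).} Factor the Frobenius $R \to F^e_* R((p^e-1)(\Delta + D))$ as $R \to F^e_* R((p^e-1)D) \hookrightarrow F^e_* R((p^e-1)(\Delta + D))$; the latter inclusion is valid because $\Delta \geq 0$, and purity of the composition forces purity of the first map. Hence $(R, D)$ is $F$-pure. By \cref{cor:RD F-pure and torsion divisors of index p^e}, for every $p$-power torsion divisor $E$ the reflexification $R(E)_D \to R_D(E|_D)$ is an isomorphism, so $R(E)_D$ is reflexive and in particular $(S_2)$ as a module over the $(S_2)$ and $(G_1)$ ring $R_D$.

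\textbf{Direction (2) $\Rightarrow$ (1).} Fix $e \gg 0$ sufficiently divisible so that $\Delta_e := (p^e-1)(\Delta + D + K_R)$ is an integral divisor whose index is a power of $p$ (as in the proof of \cref{thm:Adjunction of purity}). Since $-\Delta_e$ is $p$-power torsion, hypothesis (2) asserts that $R(-\Delta_e)_D$ is $(S_2)$ over $R_D$, and because $R_D$ is itself $(S_2)$ and $(G_1)$ the natural reflexification map $R(-\Delta_e)_D \to R_D(-\Delta_e|_D)$ is an isomorphism. Consequently there is a short exact sequence
\[ 0 \to R(-\Delta_e - D) \to R(-\Delta_e) \to R_D(-\Delta_e|_D) \to 0, \]
identical to the key SES in \cref{thm:Adjunction of purity}. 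Matlis dualizing this sequence (together with the standard SES $0 \to R(-D) \to R \to R_D \to 0$) yields a commutative diagram of local cohomology modules with exact rows and Frobenius-induced vertical maps, in which the leftmost vertical map is the Frobenius $H^{d-1}_\fm(R_D(K_D)) \to F^e_* H^{d-1}_\fm(R_D(\Delta_e|_D + K_D))$ and is therefore injective by the assumed $F$-purity of $(R_D, \Diff_D(\Delta))$ together with \cref{lem:Equivalent definitions for an F-pure pair}.

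\textbf{Conclusion via essentiality; main obstacle.} It remains to transfer injectivity from the left column to the middle column. Since $H^d_\fm(R(K_R)) \cong E_R(k)$ is an essential extension of its socle $k$, any nonzero kernel of the middle Frobenius must meet $k$; the bottom row of the diagram identifies $H^{d-1}_\fm(R_D(K_D))$ with the annihilator of the defining ideal of $D$ inside $E_R(k)$, which contains this socle. A hypothetical nonzero kernel of the middle vertical would therefore descend (via the inclusion of the left column into the middle column, which is the kernel of the bottom horizontal) to a nonzero element of $H^{d-1}_\fm(R_D(K_D))$ killed by the left vertical, contradicting the injectivity just established. Hence the middle Frobenius is injective and $(R, \Delta + D)$ is $F$-pure by another invocation of \cref{lem:Equivalent definitions for an F-pure pair}. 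The main obstacle is conceptual rather than technical: recognizing that hypothesis (2), applied to the single specific divisor $-\Delta_e$, is precisely what is needed to make the Matlis-dualized sequence short exact. Once this is in hand, the remainder of the argument is a direct adaptation of the adjunction proof, and no new techniques beyond the essentiality device from \cref{mainthm:principal with boundary} are required.
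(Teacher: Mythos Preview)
Your proposal is correct and follows essentially the same approach as the paper's proof. The only cosmetic difference is in (1) $\Rightarrow$ (2), where the paper invokes \cref{lem:RD F-pure and torsion divisors of index p^e} directly (realizing $R(E)_D$ as a summand of the $(S_2)$ module $F^e_*R_D$) rather than going through \cref{cor:RD F-pure and torsion divisors of index p^e}; your argument for (2) $\Rightarrow$ (1), including the Matlis-dualized diagram and the socle chase via essentiality, is identical to the paper's.
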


\begin{proof}
If the pair $(R,\Delta+D)$ is $F$-pure then the pair $(R,D)$ is $F$-pure. Therefore if $E$ is a torsion divisor of index $p^{e_0}$ then $R(E)_D$ is a direct summand of $F^e_*R_D$ for all sufficiently divisible $e\gg0$ by \cref{lem:RD F-pure and torsion divisors of index p^e}. We are assuming that $R_D$ is $(S_2)$. Therefore $R(E)_D$ is $(S_2)$ since it is a direct summand of an $(S_2)$ $R_D$-module. This completes the forward direction $(\ref{key-lemma-condition-1})\Rightarrow(\ref{key-lemma-condition-2})$.

 We introduce notation that will be used in the proof of the converse $(\ref{key-lemma-condition-2})\Rightarrow(\ref{key-lemma-condition-1})$. For each $e\in\NN$ let $\Delta_e=(p^e-1)(\Delta+D+K_R)$. There exists an integer $e_0$ so that for all sufficiently divisible $e\gg 0$ the divisor $-\Delta_e$ is torsion of index $p^{e_0}$. By assumption, the reflexification map $R(-\Delta_e)_D\to R_D(-\Delta_e|_D)$ is an isomorphism for all sufficiently divisible $e\gg 0$. There are short exact sequences
\[
0\to R(-\Delta_e-D)\to R(-\Delta_e)\to R_D(-\Delta_e|_D)\to 0.
\]
Matlis duality provides to us short exact sequences (again using that $R_D$ is $(S_2)$)
\[
0\to H^{d-1}_\fm(R_D(\Delta_e|_D+K_D))\to H^d_\fm(R(\Delta_e+K_R))\to H^d_\fm(R(\Delta_e+D+K_R))\to 0.
\]
Consequently, the kernel of the natural map $H^d_\fm(R(\Delta_e+K_R))\to H^d_\fm(R(\Delta_e+D+K_R))$ induced by the inclusion $R(\Delta_e+K_R)\to R(\Delta_e+D+K_R)$ is $H^{d-1}_\fm(R_D(\Delta_e|_D+K_D))$. Similarly, there are short exact sequences,
\[
0\to R(-D)\to R\to R_D\to 0
\]
and Matlis duality provides to us short exact sequences
\[
0\to H^{d-1}_\fm(R_D(K_D))\to H^d_\fm(R(K_R))\to H^d_\fm(R(K_R+D))\to 0.
\]
Therefore, the commutative diagram 
\[
\begin{xymatrix}
{
0 \ar[r] & F^e_*R(\Delta_e+K_R)\ar[r] & F^e_*R(\Delta_e+K_R+D) \ar[r] & F^e_*R(\Delta_e+K_R+D)_D\ar[r] & 0 \\
0 \ar[r]& R(K_R)\ar[r]\ar[u] & R(K_R+D)\ar[r]\ar[u] & R(K_R)_D\ar[r]\ar[u] & 0
}
\end{xymatrix}
\]
induces the commutative diagram
\begin{equation}
\tiny
\begin{xymatrix}
{
0 \ar[r] & F^e_*H^{d-1}_\fm(R_D(\Delta_e|_D+K_D)) \ar[r]& F^e_*H^d_\fm(R(\Delta_e+K_R))\ar[r] &  F^e_*H^d_\fm(R(\Delta_e+K_R+D))\ar[r] & 0 \\
0 \ar[r] & H^{d-1}_\fm(R_D(K_D))\ar[r]\ar[u] & H^d_\fm(R(K_R))  \ar[u] \ar[r]& H^d_\fm(R(K_R+D))\ar[r]\ar[u] & 0.
}
\end{xymatrix}\label{eq:inversion-1}
\end{equation}
The left vertical map in (\ref{eq:inversion-1}) is isomorphic to
\begin{equation}
(R_D\to F^e_*R_D((p^e-1)\Diff_D(\Delta)))\otimes_{R_D} H^{d-1}_\fm(R_D(K_D)).\label{eq:inversion-2}
\end{equation}

Since $(R_D,\Diff_D(\Delta))$ is $F$-pure, the map in (\ref{eq:inversion-2}) is injective by \cref{lem:Equivalent definitions for an F-pure pair}. Since $H^{d-1}_\fm(R_D(K_D))$ and $H^d_\fm(R(K_R))$ are essential extensions of the residue field, a simple chase of a socle element in the diagram (\ref{eq:inversion-1}) shows that the middle map is injective. The middle vertical map of (\ref{eq:inversion-1}) isomorphic to
\[
(R\to F^e_*R((p^e-1)(\Delta+D)))\otimes_R H^d_\fm(R(K_R)).
\]
In particular, the pair $(R,\Delta +D)$ is $F$-pure by \cref{lem:Equivalent definitions for an F-pure pair}.
This completes the proof of $(\ref{key-lemma-condition-2})\Rightarrow(\ref{key-lemma-condition-1})$.
\end{proof}

\begin{corollary}[Inversion of adjunction along a $\Q$-Cartier divisor]
\label{cor:inversion along Q-Cartier divisor}
    Let $(R,\fm,k)$ be an excellent $(S_2)$ and $(G_1)$ ring of prime characteristic $p>0$ and let $K_R$ be a choice of canonical divisor of $\Spec(R)$. Suppose that $D\geq 0$ is an effective integral $(S_2)$ and $(G_1)$ divisor, and let $\Delta\geq 0$ be an effective $\Q$-divisor on $\Spec R$ whose components are disjoint from those of $D$ and such that $(p^e-1)\Delta$ is integral for all sufficiently divisible $e\gg 0$. Suppose that $\Delta+K_R+D$ is $\Q$-Cartier. Suppose that 
    \begin{enumerate}[(I)]
    \item $D$ is $\Q$-Cartier;
    \item for each $\Q$-Cartier divisor $E$ and $\fp\in D\subseteq \Spec(R)$ that 
    \[
    \depth(R(E)_\fp)\geq \min\{\height(\fp), 3\}.
    \]
\end{enumerate}
If $(R_D,\Diff_D(\Delta))$ is $F$-pure then $(R,\Delta+D)$ is $F$-pure.
\end{corollary}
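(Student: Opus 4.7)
The plan is to reduce to the Key Lemma (\cref{lem:How to prove inversion of adjunction}), which in our setting demands only that for every $p$-power torsion divisor $E$ of $R$, the $R_D$-module $R(E)_D = R(E)/R(E-D)$ is $(S_2)$. First I would observe that any $p$-power torsion divisor is automatically $\Q$-Cartier (of index a power of $p$), so under hypothesis (I) both $E$ and $E-D$ are $\Q$-Cartier. Hypothesis (II) then supplies the depth bounds
\[
\depth R(E)_\fp \geq \min\{\height\fp, 3\}, \qquad \depth R(E-D)_\fp \geq \min\{\height\fp, 3\}
\]
at every prime $\fp \in D$.

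The main computation is a routine depth-lemma estimate applied to the defining short exact sequence
\[
0 \to R(E-D) \to R(E) \to R(E)_D \to 0,
\]
localized at $\fp \in D$. The depth lemma produces $\depth (R(E)_D)_\fp \geq \min\{\height\fp - 1, 2\}$. Translating to $R_D$-depths and heights --- using that $R$ is excellent (hence catenary) and that $D$, being a $\Q$-Cartier Weil divisor, has pure codimension one, so that $\height_{R_D} \bar{\fp} = \height_R \fp - 1$ for every $\fp$ in the image of $D$ --- gives exactly the $(S_2)$ bound $\depth_{R_D}(R(E)_D)_{\bar\fp} \geq \min\{\height_{R_D}\bar\fp, 2\}$ needed to invoke \cref{lem:How to prove inversion of adjunction}, after which the $F$-purity of $(R, \Delta + D)$ follows.

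I do not anticipate a serious obstacle: hypothesis (II) was engineered to provide precisely the depth bounds on the two flanking terms of the SES above that are needed to feed into the depth lemma, and \cref{lem:How to prove inversion of adjunction} has already isolated all of the remaining reflexivity content. The only mild bookkeeping is to verify that $p$-power torsion implies $\Q$-Cartier (immediate, since $p^n E \sim 0$ gives $p^n E = \Div_R(f)$ for some $f$) and that the height drops by exactly one when passing from $R$ to $R_D$ along the $\Q$-Cartier divisor $D$; both are standard consequences of catenarity and pure codimension one.
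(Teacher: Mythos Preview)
Your proposal is correct and follows essentially the same route as the paper: reduce to \cref{lem:How to prove inversion of adjunction}, observe that $E$ and $E-D$ are both $\Q$-Cartier, and then use hypothesis~(II) together with the short exact sequence $0 \to R(E-D) \to R(E) \to R(E)_D \to 0$ to extract the required $(S_2)$ bound. The only cosmetic difference is that the paper handles low dimension by a torsion-freeness remark and then argues inductively via the vanishing $H^1_\fm(R(E)_D) \hookrightarrow H^2_\fm(R(E-D)) = 0$, whereas you apply the depth lemma uniformly at every prime of $D$; these are equivalent packagings of the same computation.
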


\begin{proof}
    By \cref{lem:How to prove inversion of adjunction} we only require that $R(E)_D$ is an $(S_2)$ $R_D$-module whenever $E$ is a $p$-power torsion divisor. We first remark that if $r\in R$ and $s\in R(E)$, then $rs\in R(E-D)$ if and only if $r\in R(-D)$. Therefore $R(E)_D=R(E)/R(E-D)$ is a torsion-free $R_D$-module. In particular, if $\dim R\leq 2$ then $R_D$ is at most $1$-dimensional and $R(E)_D$ is a torsion-free (and therefore a maximal Cohen-Macaulay) $R_D$-module. By induction, we may assume that $\dim R\geq 3$ and aim to show that $H^1_\fm(R(E)_D)=0$.
    
    There are short exact sequences
    \[
    0\to R(E-D)\to R(E)\to R(E)_D\to 0
    \]
    and therefore $H^1_\fm(R(E))\subseteq H^2_\fm(R(E-D))$. The divisor $E-D$ is torsion and therefore the latter local cohomology module is $0$ by assumption.
\end{proof}

\Cref{lem:R has to be S3 sort of} demonstrates a necessary condition for $F$-pure inversion of adjunction, namely
    \[
    \depth(R(E)_\fp)\geq \min\{\height(\fp), 3\}
    \]
for all $\Q$-Cartier divisors $E$ and $\fp\in D\subseteq \Spec(R)$. This assumption is vacuous whenever the ambient ring is strongly $F$-regular by \cite[Corollary~3.3]{PS14}. Moreover, we can replace this hypothesis with the milder assumption that for all $\fp\in D\subseteq \Spec(R)$
    \[
    \depth(R_\fp)\geq \min\{\height(\fp), 3\}
    \] and still obtain $F$-pure inversion of adjunction under the hypothesis that $D$ is a $p$-power torsion divisor.

\begin{corollary}[Inversion of adjunction along a $p$-power torsion divisor]
    Let $(R,\fm,k)$ be an excellent $(S_2)$ and $(G_1)$ ring of prime characteristic $p>0$ and let $K_R$ be a choice of canonical divisor of $\Spec(R)$. Suppose that $D\geq 0$ is an effective integral $(S_2)$ and $(G_1)$ divisor, and let $\Delta\geq 0$ be an effective $\Q$-divisor on $\Spec R$ whose components are disjoint from those of $D$ and such that $(p^e-1)\Delta$ is integral for all sufficiently divisible $e\gg 0$. Suppose that $\Delta+K_R+D$ is $\Q$-Cartier. Suppose further that $D$ is a $p$-power torsion divisor and that for all $\fp\in D\subseteq \Spec(R)$ that
     \[
    \depth(R_\fp)\geq \min\{\height(\fp), 3\}.
    \] 
    If $(R_D,\Diff_D(\Delta))$ is $F$-pure then $(R,\Delta+D)$ is $F$-pure.
\end{corollary}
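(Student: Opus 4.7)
The strategy is to invoke the Key Lemma (\cref{lem:How to prove inversion of adjunction}) and verify its hypothesis (\ref{key-lemma-condition-2}): that $R(E)_D$ is an $(S_2)$ $R_D$-module for every $p$-power torsion divisor $E$ of $R$. The proof parallels \cref{cor:inversion along Q-Cartier divisor}, with the necessary depth control on $R(E)$ and $R(E-D)$ requiring additional work rather than following directly from a hypothesis.

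Just as in the previous corollary, $R(E)_D$ is torsion-free over $R_D$, which handles the case $\dim R \leq 2$. For $\dim R \geq 3$, induction on $\dim R$ (noting all hypotheses descend to localizations at non-maximal $\fp \in V(D)$) together with the forward direction of the Key Lemma delivers $(S_2)$ of $R(E)_D$ on the punctured spectrum of $R_D$, so the task reduces to verifying $H^1_\fm(R(E)_D) = 0$. From the short exact sequence $0 \to R(E-D) \to R(E) \to R(E)_D \to 0$ one extracts the fragment
\[
H^1_\fm(R(E)) \to H^1_\fm(R(E)_D) \to H^2_\fm(R(E-D)) \to H^2_\fm(R(E)),
\]
which reduces the problem to showing $H^1_\fm(R(F)) = H^2_\fm(R(F)) = 0$ for every $p$-power torsion divisor $F$ of $R$.

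This last vanishing is the main obstacle. The plan is first to prove that $R$ itself is $F$-pure, so that \cref{lem:Direct summand p^e torsion} realizes $R(F)$ as a direct summand of $F^{e_0}_* R$ (where $p^{e_0} F \sim 0$) and the depth hypothesis on $R$ then transfers to $R(F)$. Toward $F$-purity of $R$, note that $F$-purity of $(R_D, \Diff_D(\Delta))$ implies that of $(R_D, \Diff_D(0))$ since a pure map factors through the latter's target, so by the Key Lemma applied to the auxiliary pair $(R, D)$ with $\Delta = 0$, the $F$-purity of $(R,D)$ reduces to the very same $(S_2)$-statement for $R(E)_D$ at $p$-power torsion $E$. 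This circularity I would break by running a joint induction on $\dim R$ that simultaneously handles $(S_2)$ of $R(E)_D$ and the $F$-purity of $(R, D)$.

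The delicate part of the joint induction is the step going from the punctured spectrum to $\fm$. Concretely, at each inductive step I would choose $x \in \fm$ with $\Div_R(x) = p^{e_0} D$ (available since $D$ is $p^{e_0}$-torsion) and propagate the Frobenius splittings on $R_D$ up through the chain of submodules $xR = R(-p^{e_0} D) \subseteq R(-(p^{e_0}-1)D) \subseteq \cdots \subseteq R(-D) \subseteq R$ via a diagram-chase analogous to \cref{claim:reflexification map is an isomorphism}. The finite generation of the lower local cohomology modules from \cref{lem:S2 has fg H2} powers a Nakayama-style argument using multiplication by $x$ on these modules, which together with the propagated splittings forces the required vanishing of $H^i_\fm(R(F))$ for $i = 1, 2$ and closes the induction.
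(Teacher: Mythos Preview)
Your reduction via the Key Lemma to $H^1_\fm(R(E)_D)=0$, and thence to $H^2_\fm(R(F))=0$ for $p$-power torsion $F$, matches the paper exactly. The detour through ``first prove $R$ is $F$-pure'', however, is a wrong turn: beyond the circularity you already flag, applying the Key Lemma to the auxiliary pair $(R,D)$ with boundary $0$ would require $K_R+D$ to be $\Q$-Cartier, which does not follow from the hypotheses (only $K_R+D+\Delta$ and $D$ are $\Q$-Cartier, and $\Delta$ need not be). In fact $\Diff_D(0)$ is not even defined without this. The paper never establishes $F$-purity of $R$ as an intermediate step; it uses only the $F$-purity of $R_D$.

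The direct argument runs as follows. By induction the reflexification $R(E)_D\to R_D(E|_D)$ is an isomorphism on the punctured spectrum, so $H^2_\fm(R(E)_D)\to F^e_*H^2_\fm(R(p^eE)_D)$ is injective by $F$-purity of $R_D$ alone. In the local cohomology diagram coming from the short exact sequences $0\to R(E-D)\to R(E)\to R(E)_D\to 0$ and its $F^e_*R(p^eE)$ counterpart, the depth hypothesis gives $H^2_\fm(R(p^eE))\cong H^2_\fm(R)=0$ for $e$ large and divisible, and the injectivity just noted then forces $H^2_\fm(R(E))\to H^2_\fm(R(E)_D)$ to vanish; hence $H^2_\fm(R(E-D))\twoheadrightarrow H^2_\fm(R(E))$. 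Since $D$ is $p$-power torsion, $E-D$ is again $p$-power torsion and one iterates: choosing $n$ with $nD=\Div_R(f)$ identifies the endpoints of the chain and yields $H^2_\fm(R(E))=fH^2_\fm(R(E))$, which is finitely generated by \cref{lem:S2 has fg H2}, so Nakayama kills it. Your chain-and-Nakayama intuition is exactly the right mechanism --- it just has to be run directly on $R(E)$ for arbitrary $p$-power torsion $E$, powered only by $F$-purity of $R_D$, rather than through a preliminary $F$-purity of $R$.
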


\begin{proof}
As in the proof of \cref{cor:inversion along Q-Cartier divisor}, we may assume that $R$ is of dimension at least $3$. By induction on the dimension of $R$, we may assume that $(R,\Delta+D)$ is $F$-pure when localized at a non-maximal point of $D$. In light of \cref{lem:How to prove inversion of adjunction}, the reflexification map $R(E)_D\to R_D(E|_D)$ is an isomorphism on the punctured spectrum. Therefore it suffices to show $H^1_\fm(R(E)_D)=0$ to conclude that $R(E)_D$ is an $(S_2)$ $R_D$-module.

 As the reflexification map $R(E)_D\to R_D(E|_D)$ is an isomorphism on the punctured spectrum we observe
\begin{itemize}
    \item $H^2_\fm(R(E)_D)\cong H^2_\fm(R_D(E|_D))$, and
    \item $H^2_\fm(R(E)_D)\to F^e_*H^2_\fm(R_D(p^eE|_D))$ is an injective map as $R_D$ is $F$-pure. 
\end{itemize}
Consider the following commutative diagram:
\[
\begin{xymatrix}
{
0\ar[r] & F^e_*R(p^eE-D) \ar[r] & F^e_*R(p^eE) \ar[r] & F^e_*R(p^eE)_D \ar[r] & 0 \\
0 \ar[r] & R(E-D)\ar[u] \ar[r] & R(E)\ar[u] \ar[r]\ar[u] & R(E)_D\ar[r]\ar[u] & 0
}
\end{xymatrix}
\]
and the resulting commutative diagram of local cohomology modules:
{\footnotesize
\[
\begin{xymatrix}
{
\footnotesize
0 \ar[r] & F^e_*H^1_\fm(R(p^eE)_D) \ar[r] & F^e_*H^2_\fm(R(p^eE-D)) \ar[r] & F^e_*H^2_\fm(R(p^eE)) \ar[r] & F^e_*H^2_\fm(R(p^eE)_D) \\
0 \ar[r] & H^1_\fm(R(E)_D) \ar[r]\ar[u]_\alpha & H^2_\fm(R(E-D)) \ar[r]\ar[u]_\beta & H^2_\fm(R(E)) \ar[r]^\zeta\ar[u]_\gamma & H^2_\fm(R(E)_D).\ar[u]_\delta
}
\end{xymatrix}
\]
}
If $e\gg 0$ then $p^eE\sim 0$ in which case $H^1_\fm(R(p^eE)_D)=H^2_\fm(R(p^eE))=H^2_\fm(R)=0$ by assumption and the diagram simplifies as
\[
\begin{xymatrix}
{
0 \ar[r] & 0 \ar[r] & F^e_*H^2_\fm(R(p^eE-D)) \ar[r] & 0 \ar[r] & F^e_*H^2_\fm(R(p^eE)_D) \\
0 \ar[r] & H^1_\fm(R(E)_D) \ar[r]\ar[u]_\alpha & H^2_\fm(R(E-D)) \ar[r]\ar[u]_\beta & H^2_\fm(R(E)) \ar[r]^{\zeta}\ar[u]_\gamma & H^2_\fm(R(E)_D)\ar[u]_\delta
}
\end{xymatrix}
\]
The map $\delta$ is injective. Therefore $\zeta$ is the $0$-map and we have a surjection $H^2_\fm(R(E-D))\twoheadrightarrow H^2_\fm(R(E))$. We mention that up to this point, we have not utilized that $D$ is a $p$-power torsion divisor.

We repeat the above argument with the $p$-power torsion divisor $E-D$ to obtain a surjection $H^2_\fm(R(E-2D))\twoheadrightarrow H^2_\fm(R(E-D))$. More generally, for any $n\in\NN$ we obtain surjections
\begin{equation*}
    H^2_\fm(R(E-nD))\twoheadrightarrow H^2_\fm(R(E-(n-1)D))\twoheadrightarrow\cdots\twoheadrightarrow H^2_\fm(R(E-D))\twoheadrightarrow H^2_\fm(R(E)).
\end{equation*}
Choose $n$ so that $nD=\Div_R(f)\sim 0$. Then there is a commutative diagram
\[
\begin{xymatrix}
    {
    H^2_\fm(R(E-nD))\ar[r]^{\cong} & H^2_\fm(R(E))\\
    H^2_\fm(R(E))\ar[r]^{\cdot f}\ar[u]^{\cdot f}_{\cong} & H^2_\fm(R(E))\ar[u]^{=}.
    }
\end{xymatrix}
\]

Consequently, $H^2_\fm(R(E))=fH^2_\fm(R(E))$. The module $H^2_\fm(R(E))$ is finitely generated by \Cref{lem:S2 has fg H2} since $R(E)$ is $(S_2)$ and $\dim R\geq 3$. By Nakayama's Lemma the module $H^2_\fm(R(E))=0$. Similarly, $H^2_\fm(R(E-D))=0$ because $E-D$ is a $p$-power torsion divisor. Therefore $H^1_\fm(R(E)_D)=0$ as needed since $H^1_\fm(R(E)_D)$ is a submodule of $H^2_\fm(R(E-D))=0$.
\end{proof}

\subsection{Closing Remarks on \texorpdfstring{\Cref{mainthm:adjunction,mainthm:inversion}}{Theorems B and C}}\label{subsec:Closing Remarks} Techniques used here will be useful in solving other cases of the Inversion of Adjunction of $F$-purity problem. For example, there are cases to consider whenever the boundary divisor $\Delta$ does not have the property that $(p^e-1)\Delta$ is integral for sufficiently divisible $e\gg 0$. Arguments using our techniques will require additional assumptions on the modules $R((p^e-1)\Delta)=R(\lfloor(p^e-1)\Delta\rfloor)$ for sufficiently divisible $e\gg 0$. For example, one could develop a version of \cref{lem:How to prove inversion of adjunction} under the additional hypotheses that the for $e\gg 0$ and divisible that the round-down divisor $\lfloor (p^e-1)(\Delta+D+K_R)\rfloor$ is $\Q$-Cartier.

Alternatively, one can consider novel roundings of the divisors $(p^e-1)\Delta$ when defining a competing notion of an $F$-pure pair $(R,\Delta)$. For example, Schwede defined a pair $(R,\Delta)$ to be \emph{sharply $F$-pure} if for all sufficiently divisible $e\gg 0$ the maps 
\[
R\to F^e_*R(\lceil(p^e-1)\Delta\rceil)
\]
are pure, \cite{Sch08}. Sharp $F$-purity has a distinct advantage whenever $\Delta+D$ is $\Q$-Cartier:
\[
\Hom_R(F^e_*R(\lceil(p^e-1)\Delta\rceil),R)\cong F^e_*R(-(p^e-1)(\Delta+K_R)).
\]
Regardless, there are simple counterexamples to sharp $F$-pure inversion of adjunction, even if the ambient ring $R$ is nonsingular.

\begin{example}\label{ex:sharp-f-pure}
Let $R=\FF_p\llbracket x,y,z\rrbracket$, $\Delta=\frac{1}{p}[R/(x+y^p+z^p)]$, and $D=V(x)$. The pair $(R_D, \Delta|_D)$ is sharply $F$-pure but $(R,\Delta+\Div_R(x))$ is not sharply $F$-pure. To see this first notice that $[R/(x+y^p+z^p)]|_D=p[R_D/(y+z)]$. Therefore $\Delta|_D=[R_D/(y+z)]$. The pair $(R_D, \Delta|_D)$ is (sharply) $F$-pure. Indeed, $R_D$ is isomorphic to the regular local ring $\FF_p\llbracket y,z\rrbracket$ and $(y+z)^{p^e-1}\not\in \fm^{[p^e]}$ for all $e$. 

Observe that
\[
\lceil(p^e-1)\Delta\rceil=\left\lceil \frac{p^e-1}{p}\right\rceil\left[\frac{R}{(x+y^p+z^p)}\right]=p^{e-1}\left[\frac{R}{(x+y^p+z^p)}\right].
\]
In particular, the pair $(R,\Delta+D)$ is not sharply $F$-pure as $(x+y^p+z^p)^{p^{e-1}}x^{p^e-1}\in\fm^{[p^e]}$ for all $e\in\NN$.
\end{example}

We conclude this section by highlighting an application of our results (and those of \cite{PS22}) to a question of Enescu.
\begin{question}[see {\cite[Question 3.5]{Ene00}} and surrounding discussion]\label{question:non-F-pure-section}
Does there exist an excellent local $F$-rational ring $(R,\fm)$ such that for all nonzerodivisors $x\in R$, the ring $R/xR$ is not $F$-pure?
\end{question}
\begin{example}
    By either \cref{mainthm:principal} or \cite[Theorem A]{PS22}, any $\Q$-Gorenstein isolated singularity $(R,\fm)$ which is $F$-rational but not $F$-pure will provide an affirmative answer to \Cref{question:non-F-pure-section}. In particular, the ring
\begin{equation*}
    R=\left(\frac{\FF_2[x,y,z,w]}{(x^3+y^3+z^3+w^3)}\right)^{(n)}_\fm,\hspace{.5cm} n\geq 2
\end{equation*}
(where $(-)^{(n)}$ denotes the $n$th Veronese subring and $\fm$ is its homogeneous maximal ideal) is such an example by \cite[Example 6.3]{Sin00}.
\end{example}

\section{Manivel's Trick}\label{sec:manivel}
Recall the following theorem on deformation of $F$-purity:
\begin{theorem}
[\cite{PS22}]\label{thm:deformation}
    Let $(R,\fm,k)$ be a local $F$-finite $\Q$-Gorenstein ring of prime characteristic $p>0$. Suppose that $x\in \fm$ is a nonzerodivisor such that $R/xR$ is $(G_1)$, $(S_2)$, and $F$-pure. Then $R$ is $F$-pure.
\end{theorem}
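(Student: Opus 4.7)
The cleanest plan is to deduce this deformation statement directly from Theorem~\ref{mainthm:principal with boundary} by specializing to the trivial boundary $\Delta=0$. The hypotheses align: $R$ is $\Q$-Gorenstein so $K_R+\Delta=K_R$ is $\Q$-Cartier, $(p^e-1)\Delta=0$ is trivially integral, and $R/xR$ is assumed $(S_2)$, $(G_1)$, and $F$-pure (and any $F$-finite local ring is in particular excellent). Applying Theorem~\ref{mainthm:principal with boundary} in this setting immediately yields $F$-purity of the pair $(R,\Div_R(x))$.

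The second step is to pass from $F$-purity of the pair $(R,\Div_R(x))$ back to $F$-purity of $R$ itself. By definition, the former says that for all sufficiently divisible $e\gg 0$ the map $R\xrightarrow{\cdot F^e_*x^{p^e-1}}F^e_*R$ is pure. But this map factors as the iterated Frobenius $R\to F^e_*R$ followed by multiplication by $F^e_*x^{p^e-1}$ on $F^e_*R$, and a formal fact about pure maps is that if a composition $g\circ f$ is pure then $f$ is pure: tensoring with any $R$-module $M$, injectivity of $(g\otimes_R M)\circ(f\otimes_R M)$ forces $f\otimes_R M$ to be injective. Applying this with $f$ equal to Frobenius yields that $R\to F^e_*R$ is pure, i.e.\ $R$ is $F$-pure, as desired.

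The step I would expect to require the most attention, were one to prove Theorem~\ref{thm:deformation} independently of Theorem~\ref{mainthm:principal with boundary}, is precisely the key input driving the latter: that $R(E)_{V(x)}$ is $(S_2)$ over $R/xR$ for every $p$-power torsion divisor $E$ (\Cref{claim:thm-A-claim}). Without this reflexification input, the local-cohomology comparison between $H^{d-1}_\fm(R(K_R)_{V(x)})$ and $H^d_\fm(R(K_R))$ used in \Cref{ex:Inversion of Adjunction F-purity Gorenstein rings} is not readily available for $\Q$-Gorenstein rings whose index is divisible by $p$. The original argument of \cite{PS22} sidesteps this obstacle through canonical cyclic covers to reduce to a Gorenstein-like setting where deformation of $F$-purity is classical, and then descends the conclusion to $R$ via the direct summand property; the route through Theorem~\ref{mainthm:principal with boundary} bypasses this machinery entirely.
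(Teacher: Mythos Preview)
Your argument is correct. Applying \cref{mainthm:principal with boundary} with $\Delta=0$ gives $F$-purity of $(R,\Div_R(x))$, and the factorization you describe shows $R$ itself is $F$-pure; the hypotheses match since $F$-finite local rings are excellent and the paper's notion of $\Q$-Gorenstein already incorporates $(S_2)$ and $(G_1)$.

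That said, the paper does not give its own proof of \cref{thm:deformation}: the statement is simply quoted from \cite{PS22} and used as \emph{input} in \cref{sec:manivel}. The logical direction there is the reverse of yours---the paper shows how \cref{thm:deformation} together with Manivel's trick yields the $\Delta=0$, $F$-finite special case of \cref{mainthm:principal with boundary}. Your deduction runs the implication the other way, exactly as the introduction advertises when it says the proof of \cref{mainthm:principal} ``recovers the main theorem of \cite{PS22}.'' So your route is not the paper's proof (there is none in the paper), but it is the natural and intended corollary of the paper's main theorem, and your third paragraph correctly identifies \cref{claim:thm-A-claim} as the substantive ingredient that replaces the cyclic-cover machinery of \cite{PS22}.
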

We will demonstrate in this section how a special case of \Cref{mainthm:principal} follows from \Cref{thm:deformation} using a variant of a trick due to Manivel  \cite{Man93}.

\begin{proof}[Proof of \cref{mainthm:principal} when $R$ is $F$-finite and $\Delta=0$] 
Assume that $(R,\fm,k)$ is an $F$-finite $\mathbb{Q}$-~Gorenstein local ring of prime characteristic $p$, $x \in \fm$ is a nonzerodivisor such that $R/xR$ is $(G_1)$ and $(S_2)$, and so that $R/xR$ is $F$-pure. Suppose $n \in \mathbb{Z}_{>0}$ is relatively prime to $p$ and consider the module finite extension ring $U_n := R[Y]/(Y^n - x)$. Note first that $U_n$ is reduced, as $U_n$ is generically \'etale over $R$. Also, $U_n$ is a free $R$-module of rank $n$, from which it follows $U_n$ remains $(S_2)$ and $f$ is a nonzerodivisor on $U_n$. Consequently, if $y$ denotes the image of $Y$ in $U_n$, $y$ is also a nonzerodivisor on $U_n$, and moreover $U_n/(y) = R /xR$ is $(S_2)$, $(G_1)$ and $F$-pure. Since $\Hom_{R}(U_n,R) \cong U_n$, \textit{e.g.} by taking the projection onto the $R$-factor corresponding to $y^{n-1}$, we also see that $U_n$ is $(G_1)$ as it is Gorenstein over the Gorenstein locus of $R$. Similarly, $U_n$ remains $\mathbb{Q}$-Gorenstein. By \Cref{thm:deformation} (applied to the localizations at the finitely many maximal ideals of $U_n$), it follows that $U_n$ is $F$-pure.

Letting $n = p+1$, consider the following diagram
\[\begin{tikzcd}[column sep=large]
	R && {F_*R} \\
	\\
	{U_{p+1}} && {F_*U_{p+1}}.
	\arrow["{1 \mapsto y^p}"', from=1-1, to=3-1]
	\arrow["{1 \mapsto F_*x^{p-1}}", from=1-1, to=1-3]
	\arrow["{F_*1 \mapsto F_*y}", from=1-3, to=3-3]
	\arrow["{1 \mapsto F_*1}"', from=3-1, to=3-3]
	\arrow["\alpha", from=3-1, to=3-3]
	\arrow["\beta", from=1-1, to=3-1]
	\arrow["\gamma"', from=1-1, to=1-3]
	\arrow["\delta"', from=1-3, to=3-3]
\end{tikzcd}\]
To see that this commutes, note that  we have $$\delta(\gamma(1)) = \delta(F_*x^{p-1}) = F_*x^{p-1}y = F_*y^{(p+1)(p-1)}y = F_*y^{p^2}$$
and also 
$$
\alpha(\beta(1)) = \alpha(y^p) = y^p\alpha(1) = y^pF_*1 = F_*y^{p^2}.
$$
Since $U_{p+1}$ is $F$-pure, the bottom rightward arrow $\alpha$ is a split inclusion of $U_{p+1}$-modules, hence also of $R$-modules. The left downward arrow $\beta$ is also a split inclusion of $R$-modules. It follows that the composite mapping $\alpha \circ \beta = \delta \circ \gamma$ is a split inclusion of $R$-modules, and so in particular $\gamma$ is as well. By definition, we see that the pair $(R,\Div_R(x))$ is $F$-pure. 

\end{proof}

\begin{remark}
One can also apply Manivel's original trick in the above proof instead. With the same notation as in the first paragraph of the proof, instead consider the commutative diagram
\[\begin{tikzcd}[column sep=large]
	R && {F^e_*R} \\
	\\
	{U_n} && {F^e_*U_n}
	\arrow["{1 \mapsto y^{n-1}}"', from=1-1, to=3-1]
	\arrow["{1 \mapsto F^e_*x^{(1-1/n)(p^e-1)}}", from=1-1, to=1-3]
	\arrow["{F^e_*1 \mapsto F^e_*y^{n-1}}", from=1-3, to=3-3]
	\arrow["{1 \mapsto F^e_*1}"', from=3-1, to=3-3]
	\arrow["\beta", from=1-1, to=3-1]
	\arrow["\alpha", from=3-1, to=3-3]
	\arrow["\gamma"', from=1-1, to=1-3]
	\arrow["\delta"', from=1-3, to=3-3]
\end{tikzcd}\]
where $e \gg 0$ is sufficiently large and divisible so that $n \vert (p^e - 1)$. Since $U_n$ is $F$-pure, the bottom rightward arrow $\alpha$ is a split inclusion of $U_n$-modules, hence also of $R$-modules. The left downward arrow $\beta$ is also a split inclusion of $R$-modules. It follows that the composite mapping $\alpha \circ \beta = \delta \circ \gamma$ is a split inclusion of $R$-modules, and so in particular $\gamma$ is as well. By definition, we see that the pair $(R,(1-1/n)\Div_R(x))$ is $F$-pure. Letting $n \to \infty$ and using \cite[Theorem 4.9]{Her12}, it follows that $(R, \Div_R(x))$ is $F$-pure.
\end{remark}

\section*{Acknowledgments}
We thank Mircea Musta\c{t}\u{a} for suggesting the use of Manivel's trick in \Cref{sec:manivel}.

\printbibliography

\end{document}